\newcommand{\N}{\mathbb{N}}
\newcommand{\R}{\mathbb{R}}
\newcommand{\C}{\mathbb{C}}
\newcommand{\abs}[1]{\left|#1\right|}
\newcommand{\be}{\begin{equation}}
\newcommand{\ee}{\end{equation}}
\newcommand{\lin}{\ensuremath{\mathrm{lin}}}
\newtheorem{theorem}{Theorem}
\newtheorem{lemma}[theorem]{Lemma}
\newtheorem{prop}[theorem]{Proposition}
\theoremstyle{definition}
\newtheorem{remark}[theorem]{Remark}
\newcommand{\convH}{\operatorname{conv}(\mathcal{H})}
\title[Sampling projections]{Sampling projections in the uniform norm 
}
\author{David Krieg$^1$, Kateryna Pozharska$^{2,3}$, \\ Mario Ullrich$^4$, Tino Ullrich$^3$
}
\date{\today}
\address{
$^1$Faculty of Computer Science and Mathematics, University of Passau, Germany
{$^2$Institute of Mathematics of NAS of Ukraine, Kyiv, Ukraine}
{$^3$Faculty of Mathematics, Chemnitz University of Technology, 
Germany}
{$^4$Institute of Analysis \& Department of Quantum Information and Computation, 
Johannes Kepler University Linz, Austria}}
\email{
david.krieg@uni-passau.de} 
\email{pozharska.k@gmail.com}
\email{mario.ullrich@jku.at}
\email{tino.ullrich@mathematik.tu-chemnitz.de}
\keywords{
discretization, 
uniform approximation, 
sampling recovery, 
information-based complexity, 
subsampling,
D-optimal design}
\subjclass[2010]{
41A65; 
41A50, 
46B09. 
}
\begin{document}

\begin{abstract}
We show that there are sampling projections onto arbitrary 
$n$-dimensional subspaces of $B(D)$
with at most $2n$ samples and norm of order~$\sqrt{n}$, where $B(D)$ is the space of bounded functions on a set $D$. 
This gives a more explicit form of the Kadets-Snobar theorem for the uniform norm.
We discuss consequences for optimal recovery in~$L_p$. 
\end{abstract}

\maketitle

\section{Introduction}


The \emph{theorem of Kadets and Snobar}~\cite{KS}, 
see also~\cite[6.1.1.7]{Pie07}, 
asserts that for any $n$-dimensional subspace $V_n$ 
of a normed space $G$, there is a 
linear operator
$P\colon G\to V_n$ 
with $Pf=f$ for $f\in V_n$, i.e., $P$ is a \emph{projection onto} $V_n$, and
$\|P\|\le\sqrt{n}$. 
As a consequence 
$P$ is a linear mapping
that 
returns an approximation $Pf \in V_n$ to any element $f\in G$ 
whose error exceeds
the error of best approximation 
 by a factor of at most $1+\sqrt{n}$\,: 
For any $g\in V_n$, 
\[
 \Vert f - Pf \Vert_G \,\le\, 
 \Vert f - g \Vert_G
 + \Vert P(f - g) \Vert_G
 \,\le\, \big(1+\|P\|\big)\, \Vert f - g \Vert_G.
\]
A
best approximation,
on the other hand,
is in general not expressed by a linear mapping
of $f$.

Unfortunately, it is not clear what \emph{information} of 
$f\in G$ is required to compute its projection {$Pf \in V_n$}.
Here, we consider the case where $f$ is a function and only a finite number of function evaluations of $f$ are allowed as information. We therefore restrict to 
$G=B(D)$, i.e., the space of all bounded complex-valued functions on a set~$D$ 
equipped with the sup-norm $\|f\|_{\infty}:= \sup_{x\in D}|f(x)|$, 
and seek for \emph{sampling projections} of the form
\[
 P_m \colon B(D)\to V_n,
 \quad P_mf=\sum_{i=1}^m f(x_i)\, \varphi_i,
\]
for some natural number $m\ge n$, $x_i\in D$ and $\varphi_i\in B(D)$.
Ideally, the number $m$ of samples should be close to the dimension $n$ of the space $V_n$.
In this setting, we are only aware of 
a lemma by 
Novak~\cite[{Lemma~1.2.2}]{Novak}, 
a version of \emph{Auerbach's lemma} that implies that there is 
a sampling projection $P_m$ with $m=n$ and
$\|P_m\|\le 
n +1
$, 
see Remark~\ref{rem:interpol}.

The main goal in this paper is to show that also 
the projection in the Kadets-Snobar theorem can be given by a 
sampling projection and thus improving on Novak's result by a factor of order $\sqrt{n}$. 
However, in contrast to 
Novak, we 
require a slight 
\emph{oversampling}, i.e., we need $m\ge cn$, $c>1$, 
function evaluations for subspaces $V_n$ of dimension $n$. 
%

\begin{theorem}\label{thm:KS-new}
There is an absolute constant $C>0$ such that the following holds. Let $D$ be a set and $V_n$ be an $n$-dimensional subspace of $B(D)$. 
Then there are $2n$ points $x_1,\dots,x_{2n}\in D$ 
and functions $\varphi_1,\dots,\varphi_{2n} \in V_n$ such that 
$P\colon B(D)\to V_n$  
with 
$Pf=\sum_{i=1}^{2n} f(x_i)\, \varphi_i$  
is a projection onto $V_n$ with $\|P\| \,\le\, C\sqrt{n}$.
\end{theorem}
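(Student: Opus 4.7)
The plan is to combine a \emph{D-optimal} design measure on $D$ with an $L_2$-subsampling step to discretize the resulting orthogonal projection. I will first pick a probability measure $\mu$ on $D$ that (approximately) maximizes $\det G(\mu)$, the Gramian of any fixed basis of $V_n$ computed in $L_2(\mu)$. A standard Kiefer--Wolfowitz type argument should show that such a $\mu$ admits an $L_2(\mu)$-orthonormal basis $b_1,\dots,b_n$ of $V_n$ whose reproducing-kernel diagonal $K_\mu(x,x)=\sum_{i=1}^{n}|b_i(x)|^2$ satisfies $K_\mu(x,x)\le n$ for every $x\in D$ (with a harmless extra factor $1+\varepsilon$ if $D$ is not compact and the supremum is only approximated). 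For the corresponding orthogonal projection $P_\mu\colon B(D)\to V_n$, Cauchy--Schwarz in the kernel yields $|P_\mu f(x)|\le \sqrt{K_\mu(x,x)}\,\|P_\mu f\|_{L_2(\mu)}\le \sqrt{n}\,\|f\|_\infty$, recovering the Kadets--Snobar bound, but realized as an integral operator rather than a sampling operator.

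The heart of the argument is to replace $\mu$ by a discrete measure supported on only $2n$ points without destroying the $L_2$-structure on $V_n$. I would invoke a sharp $L_2$-subsampling theorem of Weaver / Marcus--Spielman--Srivastava type: there exist points $x_1,\dots,x_{2n}\in D$ and positive weights $w_1,\dots,w_{2n}$ such that the quadratic form $g\mapsto \sum_{i=1}^{2n}w_i|g(x_i)|^2$ is equivalent to $\|g\|_{L_2(\mu)}^2$ on $V_n$ up to absolute multiplicative constants, and such that $\sum_i w_i$ is bounded by an absolute constant. Extracting precisely $m=2n$ nodes, rather than $Cn$ with some larger $C$, is the main obstacle of the proof and will govern the final constant; this is the sharp endpoint of Kadison--Singer style discretization.

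Given such a weighted quadrature rule, I would define $Pf$ as the unique minimizer in $V_n$ of $g\mapsto \sum_{i=1}^{2n}w_i|f(x_i)-g(x_i)|^2$. Linearity of the normal equations in the data $(f(x_i))_i$ automatically puts $P$ in the required sampling form $Pf=\sum_{i=1}^{2n}f(x_i)\,\varphi_i$ with $\varphi_i\in V_n$, and $Pf=f$ for $f\in V_n$ holds by construction. For the operator norm, write $\|\cdot\|_{\mathrm{disc}}^2=\sum_i w_i|\cdot(x_i)|^2$; Pythagoras in this discrete inner product gives $\|Pf\|_{\mathrm{disc}}\le\|f\|_{\mathrm{disc}}\le \bigl(\textstyle\sum_i w_i\bigr)^{1/2}\|f\|_\infty$, and the subsampling equivalence upgrades this to $\|Pf\|_{L_2(\mu)}\lesssim \|f\|_\infty$. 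A final application of kernel Cauchy--Schwarz, $|Pf(x)|^2\le K_\mu(x,x)\,\|Pf\|_{L_2(\mu)}^2\le n\cdot O(\|f\|_\infty^2)$, delivers $\|P\|\le C\sqrt{n}$ as claimed.
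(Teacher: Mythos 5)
Your proposal is correct and follows essentially the same route as the paper: a Kiefer--Wolfowitz D-optimal design giving the Nikol'skii inequality $\|\cdot\|_\infty\le\sqrt{n+\varepsilon}\,\|\cdot\|_{L_2(\mu)}$ on $V_n$, a BSS/MSS-type subsampling of the resulting discrete measure down to $2n$ points, and the (weighted) least-squares projection on those points bounded by the chain sup-norm $\to$ discretized $L_2$ $\to$ sup-norm. The only cosmetic differences are that the paper uses an unweighted uniform measure on the subsampled points and only needs the one-sided subsampling inequality, whereas you request a two-sided weighted equivalence; neither affects the argument.
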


\medskip

Theorem~\ref{thm:KS-new} is sharp in the following sense:
\begin{itemize}
    \item Using $m=\mathcal{O}(n)$ samples, 
    the norm bound $C\sqrt{n}$ 
    cannot be replaced with a lower-order term, see Section~{\ref{sec:sharpness}.}
    \item If we want to use $m=n$ samples,
    the norm bound $C\sqrt{n}$ has to be replaced by a linear term in $n$,
    see~\cite[Rem.~1.2.3]{Novak}. 
\end{itemize}
The oversampling factor $2$ can be replaced by any constant {$c>1+1/n$},
see Remark~\ref{rem:const}.
It is an interesting open problem, what the smallest possible norm is when using only $n+n^\alpha$ samples with $0\le \alpha <1$.

\medskip

Theorem~\ref{thm:KS-new} is based on a specific type of 
\emph{discretization} of the uniform norm, 
which might be of independent interest. 
In particular, Theorem~\ref{thm:KS-new} will be deduced from the following
{and Lemma~\ref{lem:sampling-projections-general}}, 
see Section~\ref{sec:LS}.

\begin{theorem}\label{thm:disc}
Let $D$ be a set and $V_n$ be an $n$-dimensional subspace of $B(D)$. 
Then there are 
points $x_1,\dots,x_{2n}\in D$ 
such that, for all $f\in V_n$, we have
\begin{equation}\label{eq:disc-intro} 
 \|f\|_{\infty} \;\leq\; 
 42
 \, 
 \Big(\sum_{k=1}^{2n} \abs{f(x_k)}^2 \Big)^{1/2}.
\end{equation}
\end{theorem}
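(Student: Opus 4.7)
The plan is to reduce the theorem to two classical ingredients: a Nikolskii-type inequality that controls $\|f\|_\infty$ by an $L_2$-norm against a suitable probability measure on $D$, and an unweighted subsampling of that measure down to exactly $2n$ points.

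For the first ingredient, I would invoke the \emph{D-optimal design} measure --- equivalently, John's theorem applied to the unit ball of $(V_n, \|\cdot\|_\infty)$. This yields a finitely supported probability measure $\mu$ on $D$ such that
\[
\|f\|_\infty \;\leq\; \sqrt{n}\,\|f\|_{L_2(\mu)} \qquad \text{for every } f\in V_n.
\]
Concretely, $\mu$ can be chosen to maximize the determinant of the Gram matrix of a fixed orthonormal basis of $V_n$ among all finitely supported probability measures; the bound then follows from the pointwise estimate $K_\mu(x,x)\le n$ on the diagonal of the reproducing kernel of $V_n$ in $L_2(\mu)$, which is the measure-theoretic counterpart of the Kadets--Snobar bound.

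For the second ingredient, I would apply an unweighted Marcinkiewicz-type discretization of the $L_2(\mu)$-norm on $V_n$. The $L_\infty$--$L_2$ estimate of the first step supplies exactly the uniform bound on the evaluation vectors $\varphi(x)=(\varphi_1(x),\ldots,\varphi_n(x))$ (in a $\mu$-orthonormal basis) that is required for a spectral sparsification in the spirit of Marcus--Spielman--Srivastava, or Weaver's $\mathrm{KS}_2$ theorem, to produce points $x_1,\ldots,x_{2n}\in D$ satisfying
\[
\|f\|_{L_2(\mu)}^2 \;\leq\; \frac{C}{2n}\sum_{k=1}^{2n}|f(x_k)|^2 \qquad \text{for all } f\in V_n,
\]
with some absolute constant $C$. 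Chaining the two inequalities yields
\[
\|f\|_\infty^2 \;\leq\; n\,\|f\|_{L_2(\mu)}^2 \;\leq\; \frac{C}{2}\sum_{k=1}^{2n}|f(x_k)|^2,
\]
and a careful tracking of constants should give the explicit factor $42$.

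The main obstacle is the subsampling step. Basic concentration (e.g.\ Matrix Chernoff) gives a comparable bound only with $m = Cn\log n$ samples, and standard importance-sampling arguments need a large non-sharp oversampling constant. Producing exactly $m=2n$ samples together with an absolute constant demands a derandomized / rounding form of BSS or MSS spectral sparsification, engineered to this $L_\infty$--$L_2$ setting --- and the Nikolskii-type inequality of the first step is precisely the hypothesis that makes such a tight sparsification applicable.
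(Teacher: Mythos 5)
Your proposal follows essentially the same route as the paper: a Kiefer--Wolfowitz/D-optimal-design measure giving the Nikolskii-type bound $\|f\|_\infty \le \sqrt{n+\varepsilon}\,\|f\|_{L_2(\mu)}$ (Proposition~\ref{prop:BD}), followed by a BSS-type constructive subsampling down to $2n$ unweighted points (Lemma~\ref{lem:disc2}, quoted from \cite{BSU}), and then chaining the two inequalities. The only minor caveat is that for a general set $D$ the Gram-determinant supremum need not be attained, so the first step only yields $\sqrt{n+\varepsilon}$ rather than $\sqrt{n}$, which is harmless for the final constant $42$.
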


\medskip

Theorem~\ref{thm:disc} bounds the continuous sup-norm by a discrete $\ell_2$-norm.
It might appear more natural to the reader to compare the sup-norm 
directly with a discrete sup-norm. 
This is a much more studied subject, see Remark~\ref{rem:uniform-disc}. 
Clearly, \eqref{eq:disc-intro} also gives
\begin{equation}\label{eq:disc-uniform}
 \|f\|_{\infty} \,\leq\, c \sqrt{n}\ \max_{k\le 2n}\, \abs{f(x_k)}
\end{equation}
with the new constant $c=\sqrt{2} C$, 
with $C=42$ from Theorem~\ref{thm:disc},
see also Remark~\ref{rem:const}
regarding constants.
However, working directly 
with the discrete $\ell_2$-norm
is crucial in our proof of Theorem~\ref{thm:KS-new} and
seems to be a key novelty in our approach.

\medskip

The first ingredient in the proof of Theorem~\ref{thm:disc} is a result by Kiefer and Wolfowitz from the classical paper \cite{KieWo60},
which we discuss and refine in Section~\ref{sec:KW}.
It provides a discrete probability measure $\varrho$ on $D$ 
with at most $n^2+1$ nodes 
such that $\Vert \cdot \Vert_\infty \le \sqrt{n} \Vert \cdot \Vert_{L_2(\varrho)}$ on the space $V_n$.
The second ingredient is a subsampling approach from~\cite{BSS},
in the form of a result 
from \cite{BSU}.

\medskip

With the above type of discretization,
we are also able to give an explicit expression 
for the sampling projection of Theorem~\ref{thm:KS-new}: 
an unweighted least squares method. 
To be precise, if $X \subset D$ is a finite multiset 
(i.e., a set where the same element is allowed to appear multiple times), 
we define a seminorm $\Vert \cdot \Vert_X$ on $B(D)$ via 
\begin{equation}\label{eq:def-seminorm}
  \Vert f \Vert_X^2 \,:=\, \frac{1}{|X|} \sum_{x\in X} |f(x)|^2.
\end{equation}
The (unweighted) least-squares algorithm $A(X,V)\colon B(D) \to V$
on a finite-dimensional subspace $V\subset {B(D)}$ is then defined by
\[
  A(X,V) f \,:=\,  \underset{g \in V}{\rm argmin}\ \Vert g-f \Vert_X.
\]
It is a well-defined linear operator if and only if $\Vert \cdot\Vert_X$ is a norm on $V$. 
In this case, it is clearly a projection onto $V$.

\medskip 

{The discretization result from Theorem~\ref{thm:disc} 
implies the following error bound for least squares approximation in the uniform norm, see Lemma~\ref{lem:general-norm}.}

\begin{theorem}\label{thm:LS_sup}
    There is a universal constant $C>0$ such that the following holds. Let $D$ be a 
    set 
    and $V_n$ be 
an $n$-dimensional subspace of $B(D)$.  
    Then there exists a multiset $X\subset D$ 
    of cardinality at most $2n$ such that, for all $f\in B(D)$,
    \[
     \Vert f - A(X,V_n) f \Vert_\infty
     \,\le\, C \sqrt{n} \cdot \inf_{g\in V_n} \Vert f - g \Vert_\infty.
    \]
\end{theorem}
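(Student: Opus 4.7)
The plan is to combine the discretization inequality of Theorem~\ref{thm:disc} with the standard orthogonal-projection interpretation of the least-squares operator. Let $x_1,\dots,x_{2n}$ be the points provided by Theorem~\ref{thm:disc} and set $X=\{x_1,\dots,x_{2n}\}$. The first thing I would observe is that the seminorm $\|\cdot\|_X$ defined in~\eqref{eq:def-seminorm} is in fact a \emph{norm} on $V_n$: if $f\in V_n$ satisfies $\|f\|_X=0$, then all samples $f(x_k)$ vanish, and the discretization inequality forces $\|f\|_\infty=0$. Consequently $P:=A(X,V_n)$ is a well-defined linear projection onto $V_n$, and with respect to the inner product associated to $\|\cdot\|_X$ it is the orthogonal projection, so $\|Pg\|_X\le \|g\|_X$ for every $g\in B(D)$.

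Next I would reduce the error analysis to a bound on $\|P\|$ as a map from $(B(D),\|\cdot\|_\infty)$ to $(V_n,\|\cdot\|_\infty)$. For any $g\in V_n$, since $Pg=g$,
\[
  \|f-Pf\|_\infty \;\le\; \|f-g\|_\infty + \|P(f-g)\|_\infty \;\le\; (1+\|P\|)\,\|f-g\|_\infty.
\]
Taking the infimum over $g\in V_n$ reduces the claim to showing $\|P\|\le C\sqrt n$ for some absolute $C$.

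To estimate $\|P\|$, let $h\in B(D)$ be arbitrary. Then $Ph\in V_n$, so Theorem~\ref{thm:disc} gives
\[
  \|Ph\|_\infty \;\le\; 42\Bigl(\sum_{k=1}^{2n}|Ph(x_k)|^2\Bigr)^{1/2} \;=\; 42\sqrt{2n}\,\|Ph\|_X.
\]
Using that $P$ is the $\|\cdot\|_X$-orthogonal projection, $\|Ph\|_X\le \|h\|_X$, and trivially $\|h\|_X\le \|h\|_\infty$ since $\|h\|_X^2=\frac{1}{2n}\sum_{k=1}^{2n}|h(x_k)|^2\le \|h\|_\infty^2$. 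Combining these,
\[
  \|Ph\|_\infty \;\le\; 42\sqrt{2n}\,\|h\|_\infty,
\]
which yields $\|P\|\le 42\sqrt{2n}$ and therefore $\|f-Pf\|_\infty\le (1+42\sqrt{2n})\inf_{g\in V_n}\|f-g\|_\infty\le C\sqrt n\,\inf_{g\in V_n}\|f-g\|_\infty$, as required.

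There is essentially no hard step here once Theorem~\ref{thm:disc} is available: the main subtlety is noticing that the least-squares projection can be analyzed via the $L_2$-discretization rather than only an $\ell_\infty$-discretization, which is precisely why the paper emphasizes that the $\ell_2$ form of the discretization is crucial. The only point where one must be careful is that the operator norm bound on $P$ uses the non-expansiveness of an orthogonal projection with respect to the \emph{same} norm that appears on the right-hand side of the discretization inequality; this is exactly what makes the unweighted $\|\cdot\|_X$ inner product the natural choice.
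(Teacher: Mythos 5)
Your proof is correct and follows essentially the same route as the paper: it combines the $\ell_2$-discretization of Theorem~\ref{thm:disc} with the non-expansiveness of the least-squares projection in the discrete norm $\Vert\cdot\Vert_X$, which is exactly the content of Lemma~\ref{lem:general-norm} (and your operator-norm bound $\Vert P\Vert\le 42\sqrt{2n}$ is the paper's Lemma~\ref{lem:sampling-projections-general} combined with Theorem~\ref{thm:disc_sup}). The constants you obtain even match those in Remark~\ref{rem:const}.
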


\smallskip

Related
results have been obtained under the assumption that the used point sets are suitable for the discretization in the uniform norm, see e.g.~\cite[Thm.~2]{CL08}, and Remark~\ref{rem:uniform-disc}. 
Moreover, we can use Theorem~\ref{thm:disc} also in combination with \cite[Thm.~2.1]{T20} to get a version of Theorem~\ref{thm:LS_sup} with the least-squares estimator replaced by an estimator minimizing the discrete sup-norm. But this method is in general not linear.
\medskip

The remainder of the paper is structured as follows:
\begin{itemize}
    \item In Section~\ref{sec:KW}, we discuss the extremal probability measure 
    originating in \cite{KieWo60}.
    \item In Section~\ref{sec:disc}, we prove our 
    results on discretization.
    \item In Section~\ref{sec:LS}, we study the error of least-squares algorithms.
    \item In Section~\ref{sec:OR}, we discuss consequences for the problem of optimal sampling recovery and the sharpness of our results.
\end{itemize}
\medskip

All our results have extensions to the spaces 
$L_p(\mu):=L_p(D,\mu)$,
where $\mu$ is an arbitrary probability measure on $D$ and $1\le p \le \infty$. 
In particular, we will prove in Theorem~\ref{thm:widths} the relation
\[
 g_{4n}^{\rm lin}(F,L_p(\mu))
 \,\le\, C\,n^{(1/2-1/p)_+}\,d_n(F,B_\mu(D)),
\]
between the linear sampling numbers and the Kolmogorov numbers of a class $F\subset B_\mu(D)$,
the space of bounded measurable functions on $D$, see Section~\ref{sec:OR} for the definitions and details.

\goodbreak

\medskip

\begin{remark}[Auerbach's lemma and interpolating projections]\label{rem:interpol}
Auerbach's lemma is a classical result that asserts that, 
for general normed spaces $G$ and all $n$-dimensional subspaces $V_n\subset G$, 
there exists a basis $\varphi_1,\dots,\varphi_n\in V_n$ and linear functionals 
$\ell_1,\dots,\ell_n$ on $G$ with $\ell_i(\varphi_j)=\delta_{i,j}$ and $\|\ell_i\|=\|\varphi_i\|=1$ for $i,j=1,\dots,n$, 
see e.g.~\cite[5.6.2.4]{Pie07}, also for some history of the result.
This implies that 
$Pf:=\sum_{i=1}^n\ell_i(f) \varphi_i$ is a projection onto $V_n$ with norm at most $n$.
A sampling version of Auerbach's lemma has been given
{in \cite[1.2.2]{Novak},
where it is shown that}, for $G=B(D)$, the $\ell_i$ can be chosen to be function evaluations if we fix any $\varepsilon>0$ and allow $\|\varphi_i\|\le1+\varepsilon$. 

In particular, for $\varepsilon=1/n$, we get
a sampling projection $Pf=\sum_{i=1}^{n} f(x_i)\, \varphi_i$ onto $V_n$ with $\|P\|\le n+1$, where $\varphi_i(x_j)=\delta_{i,j}$. 
This also shows that $P$ is an \emph{interpolating projection}, 
meaning that $Pf(x_i)=f(x_i)$ at the sampling points for all $f\in B(D)$. 
Due to the oversampling, the same does not hold true for the projection in Theorem~\ref{thm:KS-new}.
\end{remark}

\smallskip

\begin{remark}[Discretization of the uniform norm] \label{rem:uniform-disc} 
Asking for (sequences of) point sets $P_n\subset D$ 
such that $\|f\|_\infty \le C_n \max_{x\in P_n} |f(x)|$ 
for all $f\in V_n$ 
is often called \textit{discretization of the uniform norm} on $V_n$, see~\cite{KKT21}.  
{Under additional assumptions on $C_n$ and $\# P_n$, }
the corresponding point sets are sometimes called 
\textit{(weakly) admissible meshes}~\cite{CL08,XN23} 
or, if $C_n\asymp 1$, 
\textit{norming sets}~\cite{Bos18}. \\
Norming sets with $\#P_n\asymp n$ exist for special {$V_n$,} 
like polynomials, see~\cite{Bos18,XN23}. 
However, for general $V_n$, 
the relation $\log(\#P_n)\asymp n$ 
is necessary (\cite[Thm.~1.2]{KKT21}) and sufficient (\cite[Thm.~1.3]{KKT21}). 
If one insists in $\#P_n\asymp n$, then it is known that one can choose {$C_n=n+\varepsilon$ (with any $\varepsilon>0$)} for 
general $V_n$,
see~\cite[Prop.~1.2.3]{Novak}. 
This can be improved to $C_n\asymp\sqrt{n}$ for the special case of trigonometric polynomials~\cite[Thm.~1.1]{KKT21} or, more generally, for $V_n$ that satisfy 
a Nikol'skii inequality, see Remark~\ref{rem:Nikolskii}. 
Theorem~\ref{thm:disc} 
shows that the latter condition is not needed. 
\end{remark}

\smallskip

\begin{remark}[Nikol'skii inequalities] \label{rem:Nikolskii}
{There are several papers that contain results 
that are more or less similar to our discretization and approximation results,
but where the factor $\sqrt{n}$ is replaced by 
\begin{equation*}
H(V_n,\mu) \,:=\, \sup_{f\in V_n \setminus \{0\}} \frac{\|f\|_\infty}{\|f\|_{L_2(\mu)}} 
\end{equation*}
for some probability measure $\mu$ on $D$.}
See especially  
{\cite[Thm. 2.12]{DPTT19},} 
\cite[Thm.~2.3]{Kosov_Temlyakov23}, 
\cite[Thm.~1.5 \& Sect.~6.3]{KKT21},
and~\cite[Cor.~5.3]{PU}. 
{The resulting inequality
\[
\|f\|_\infty \,\le\, H(V_n,\mu)\cdot \|f\|_{L_2(\mu)},
\quad f\in V_n,
\]
is called a Nikol'skii-type inequality.
The constant $H(V_n,\mu)$
can be expressed in terms of any $L_2(\mu)$-orthonormal basis $b_1,\hdots,b_n$ of $V_n$ and satisfies
\[
 H(V_n,\mu) \,=\, \sup_{x\in D} \sqrt{|b_1(x)|^2+ \hdots + |b_n(x)|^2}
 \,\ge \sqrt{n}.
\]
In the above results, 
the probability measure $\mu$ can be chosen freely. For these results, the theorem of} 
Kiefer and Wolfowitz~\cite{KieWo60} 
presents itself the missing piece,
as it shows that, {for every $V_n\subset B(D)$,} there 
exists a probability measure $\mu$  {with $H(V_n,\mu) \le \sqrt{n} +\varepsilon$}. 
\end{remark}


\smallskip

\begin{remark}[Constant] \label{rem:const}
The constant $C$ in 
Theorems~\ref{thm:KS-new}
and \ref{thm:LS_sup}
can be chosen as $C=60$. In fact, one can replace the oversampling constant 2 by any smaller constant $b> 1+1/n$ and 
then take $C=60 \cdot (b-1)^{-3/2}$, see \cite[Thm.~6.2]{BSU}. 
\end{remark}


\medskip


\section{On maximizers of the Gram determinant} 
\label{sec:KW}

The following section is based on results by Kiefer and Wolfowitz from \cite{KieWo60}. For the convenience of the reader we give a self-contained proof of the results relevant for our purpose. In addition, we comment on some extensions of the results for complex-valued functions and on the discreteness 
of the measure.  

\medskip

In this section, we consider the set of atomic probability measures
\begin{equation}\label{eq:def_measures}
 \mathcal M \,:=\, \left\{ \sum_{i=1}^N \lambda_i \delta_{x_i} 
 \colon
 N\in \mathbb{N},\, \lambda_k\, \in [0,1],\, \sum_{k=1}^N \lambda_k = 1,\, x_k\in D \right\}.
\end{equation}
Given an $n$-dimensional subspace $V_n$ of complex-valued functions from $B(D)$, 
the goal is to prove a discretization result of the form 
\begin{equation}\label{eq:discr}
    \|f\|_{\infty} \leq \sqrt{n+\varepsilon}\, 
    \Vert f \Vert_{L_2(\varrho)}
    \ ,\quad f\in V_n\,,
\end{equation}
for some $\varrho \in \mathcal M$.

\medskip

The idea of the construction can roughly be described as follows: \\
Given a basis $F = (f_1,\hdots,f_n)^\top$ of $ V_n$, 
we look for a probability measure $\varrho \in \mathcal M$
which maximizes the determinant of the Gram matrix 
\begin{equation}\label{eq:definition-Gram}
G_\varrho[F] \,:=\, \left( \langle f_i, f_j \rangle_{L_2(\varrho)} \right)_{i,j \le n}.
\end{equation}
It was observed already in \cite{KieWo60}
that $\varrho$ maximizes 
the determinant of $G_\varrho[F]$ 
iff it minimizes
the (inverse of the) Christoffel function
\[
    \sup\limits_{f \in V_n \setminus\{0\}} \|f\|_{\infty} /\|f\|_{L_2(\varrho)}\,,
\]
which is the problem that we aim to resolve.
The corresponding measure $\varrho$ will satisfy~\eqref{eq:discr}. 

{As noted (without proof) in \cite{KieWo60},}
we can choose the measure $\varrho \in \mathcal M$ that satisfies \eqref{eq:discr} supported on at most $N\le n^2+1$ sampling points.
Based on a subsampling approach originating in~\cite{BSS}, 
we will then show 
in Section~\ref{sec:disc} that \eqref{eq:discr}
holds, up to a constant factor, for  
a measure of the form $\varrho = \frac{1}{2n} \sum_{i=1}^{2n} \delta_{x_i}$ with {only $2n$ sampling points} $x_i \in D$. 

\begin{remark}
The above procedure is reminiscent of Fekete points~\cite{Bos18},
which are obtained by maximizing the determinant
of the Gram matrix~\eqref{eq:definition-Gram} over all
measures of the form $\varrho = \frac1n \sum_{i=1}^n \delta_{x_i}$ with $x_1,\hdots,x_n\in D$. 
{Here, we are looking for a  maximizer within the set $\mathcal M$ of atomic probability measures with finite support.
In general,}
a probability measure that maximizes the determinant
of the Gram matrix~\eqref{eq:definition-Gram} is called 
an \emph{(approximate) D-optimal design},
{see, e.g., \cite{BKPSU2024,Bos22,HL25,Karlin_Studden1966}.}
\end{remark}

\smallskip

\begin{prop}\label{prop:BD} Let $D$ be a set 
and $V_n$ be an $n$-dimen\-sion\-al subspace of~$B(D)$. 
For every $\varepsilon>0$ and $n\in\N$, 
there exists a natural number 
$N\le
\,\dim({\rm span}(V_n\cdot \overline{V_n}))+1$,
distinct points $x_1,\dots,x_{N}\in D$
and non-negative weights $(\lambda_k)_{k=1}^N$ satisfying $\sum_{k=1}^N \lambda_k = 1$ 
such that, for all $f\in V_n$, we have
$$
 \|f\|_{\infty} \,\leq\, \sqrt{n+\varepsilon} \,\bigg(\sum_{k=1}^{N} \lambda_k \abs{f(x_k)}^2 \bigg)^{1/2}.
$$ 
If $V_n\subset C(D)$ for some compact topological space $D$,
then the conclusion also holds for $\varepsilon=0$. 
\end{prop}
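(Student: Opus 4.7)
The approach follows the classical D-optimal design strategy of Kiefer--Wolfowitz. Fix a basis $f_1,\dots,f_n$ of $V_n$ and set $F(x):=(f_1(x),\dots,f_n(x))^\top$. For $\varrho\in\mathcal{M}$ define the Gram matrix $G_\varrho[F]:=\int F(x)F(x)^*\,d\varrho(x)$ and $\Phi(\varrho):=\det G_\varrho[F]$. My plan has three steps: (i) find $\varrho\in\mathcal{M}$ that nearly maximizes $\Phi$; (ii) use a first-order variational argument to bound the Christoffel function $u(x):=F(x)^*G_\varrho[F]^{-1}F(x)$ by $n+\varepsilon$; (iii) apply Carath\'eodory's theorem to sparsify $\varrho$ to at most $d+1$ atoms, where $d:=\dim\mathrm{span}(V_n\cdot\overline{V_n})$.

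For step (i), $G_\varrho[F]$ depends on $\varrho$ only through the linear functionals $\varrho\mapsto\int f_i\,\overline{f_j}\,d\varrho$; accordingly
\[
\mathcal{K}\,:=\,\{G_\varrho[F]:\varrho\in\mathcal{M}\}\,=\,\operatorname{conv}\{F(x)F(x)^*:x\in D\}
\]
is bounded (since $f_i\in B(D)$) and sits inside a real affine subspace of dimension at most $d$. Hence $\overline{\mathcal{K}}$ is compact and $\det$ attains a positive maximum $M$ there; positivity follows from any $\varrho=\tfrac1n\sum_{i=1}^{n}\delta_{y_i}$ with $F(y_1),\dots,F(y_n)$ linearly independent. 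For every $\delta>0$ we can then pick $\varrho\in\mathcal{M}$ with $\Phi(\varrho)\ge(1-\delta)M$. If $V_n\subset C(D)$ with $D$ compact, then $\{F(x)F(x)^*:x\in D\}$ is itself compact, so $\mathcal{K}$ is closed and $M$ is attained within $\mathcal{M}$, allowing $\delta=0$.

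For step (ii), fix such $\varrho$ and $x\in D$ and set $\varrho_t:=(1-t)\varrho+t\delta_x\in\mathcal{M}$. The matrix determinant lemma applied to the rank-one update $G_{\varrho_t}[F]=(1-t)G_\varrho[F]+t\,F(x)F(x)^*$ yields
\[
\Phi(\varrho_t)\,=\,\Phi(\varrho)\,(1-t)^{n-1}\bigl(1-t+t\,u(x)\bigr).
\]
Maximizing over $t\in[0,1]$ (attained at $t^*=(u(x)-n)/(n(u(x)-1))$ when $u(x)>n$) and using $\Phi(\varrho_t)\le M\le\Phi(\varrho)/(1-\delta)$ gives
\[
\Bigl(\tfrac{u(x)}{n}\Bigr)^{n}\Bigl(\tfrac{n-1}{u(x)-1}\Bigr)^{n-1}\,\le\,\frac{1}{1-\delta}.
\]
A Taylor expansion around $u(x)=n$ shows the left-hand side equals $1+\tfrac{(u(x)-n)^{2}}{2n(n-1)}+O((u(x)-n)^{3})$, so choosing $\delta$ small enough forces $u(x)\le n+\varepsilon$ uniformly in $x\in D$; in the compact case, $\delta=0$ gives $u(x)\le n$. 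The standard Christoffel identity
\[
\sup_{f\in V_n\setminus\{0\}}\frac{|f(x)|^{2}}{\|f\|_{L_2(\varrho)}^{2}}\,=\,u(x),
\]
proved by writing $f=c^\top F$ and optimizing the Rayleigh quotient in $c\in\mathbb{C}^n$, then delivers $\|f\|_\infty\le\sqrt{n+\varepsilon}\,\|f\|_{L_2(\varrho)}$ for every $f\in V_n$.

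For step (iii), Carath\'eodory's theorem inside the real affine subspace of dimension at most $d$ produces a representation $G_\varrho[F]=\sum_{k=1}^{N}\lambda_k F(x_k)F(x_k)^*$ with $N\le d+1$, distinct $x_k\in D$, and $\lambda_k\ge 0$ summing to one. The atomic measure $\varrho':=\sum_{k}\lambda_k\delta_{x_k}\in\mathcal{M}$ then satisfies $G_{\varrho'}[F]=G_\varrho[F]$, so the Christoffel bound from step (ii) transfers verbatim to $\varrho'$. The main obstacle is the quantitative estimate in step (ii): the naive stationarity argument applies only at an exact maximizer of $\Phi$, so in the general case we must quantify how near-optimality $\Phi(\varrho)\ge(1-\delta)M$ controls $u(x)-n$, which the Taylor analysis above accomplishes.
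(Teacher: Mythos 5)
Your proposal is correct and follows the same overall Kiefer--Wolfowitz strategy as the paper (maximize the Gram determinant, variational argument, Christoffel identity, Carath\'eodory sparsification), but it handles the crucial difficulty --- that the supremum of $\det G_\varrho[F]$ need not be attained in $\mathcal M$ --- in a genuinely different way. The paper passes to the closure $\overline{\operatorname{conv}(\mathcal H)}$, applies the first-order stationarity condition at the \emph{exact} maximizer $A$ there to get $F(x)^*A^{-1}F(x)\le n$, and then transfers this bound to a genuine measure $\varrho\in\mathcal M$ with $G_\varrho[F]$ close to $A$ via continuity of matrix inversion. You instead stay inside $\mathcal M$ throughout: you take a $\delta$-near-maximizer and prove a quantitative stability estimate, using the exact rank-one update $\Phi(\varrho_t)=\Phi(\varrho)(1-t)^{n-1}(1-t+tu(x))$ and optimizing over $t$ to show that near-optimality forces $u(x)\le n+\varepsilon$. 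Your computation of $t^*$ and of $h(u):=(u/n)^n((n-1)/(u-1))^{n-1}$ is correct; to make the final implication airtight you should add the one-line observation that $\frac{d}{du}\log h(u)=(u-n)/(u(u-1))>0$ for $u>n$, so $h$ is increasing on $(n,\infty)$ and $h(u)\le 1/(1-\delta)$ gives a \emph{uniform} bound $u\le h^{-1}(1/(1-\delta))\to n$; the local Taylor expansion alone does not rule out large $u$. (For $n=1$ the optimum sits at the endpoint $t=1$ and $h(u)=u$, so the argument still goes through.) Two smaller remarks: the existence of points with $F(y_1),\dots,F(y_n)$ linearly independent, which you use for positivity of $M$, is exactly the paper's Lemma~\ref{lem1} and merits a proof or citation; and your placement of Carath\'eodory at the end (sparsifying the near-maximizer while preserving its Gram matrix exactly) is a valid alternative to the paper's approach of building the $r+1$-node representation into the description of $\operatorname{conv}(\mathcal H)$ from the start. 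Your route is more self-contained and quantitative (no appeal to continuity of matrix inversion), at the cost of the explicit second-order computation.
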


%

\smallskip

{Note that $\dim({\rm span}(V_n\cdot \overline{V_n})) \le n^2$,
but the dimension can be much smaller for particular spaces $V_n$.
A simple example is the $\R$-vector space of univariate polynomials of degree less than $n$, in which case $\dim({\rm span}(V_n\cdot \overline{V_n})) 
=2n-1
$.}
The proof of Proposition~\ref{prop:BD} will make use of the following 
simple (known) lemma.

\smallskip
\begin{lemma}\label{lem1} Let $f_1,...,f_n \colon D \to \C$ be linearly independent functions on a set $D$. Then there are points $x_1,...,x_n \in D$ such that 
the Gram matrix of the uniform distribution on $\{x_1,\hdots,x_n\}$ is positive definite.
\end{lemma}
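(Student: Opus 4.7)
The plan is to reduce the statement about positive definiteness of the Gram matrix to the non-vanishing of a single determinant, and then prove that by induction on $n$.

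First I would observe that the Gram matrix of the uniform distribution on $\{x_1,\dots,x_n\}$ is
\[
 G \,=\, \frac{1}{n}\, A A^*, \qquad A \,=\, \bigl(f_i(x_k)\bigr)_{i,k=1}^{n}\in\C^{n\times n}.
\]
Such a matrix is always positive semi-definite, and it is positive definite if and only if $A$ has full rank, i.e.\ $\det A\neq 0$. So the task reduces to finding points $x_1,\dots,x_n\in D$ with $\det\bigl(f_i(x_k)\bigr)_{i,k=1}^n\neq 0$.

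I would prove this by induction on $n$. For $n=1$, linear independence means $f_1\not\equiv 0$, so any $x_1$ with $f_1(x_1)\neq 0$ works. For the inductive step, since not all $f_i$ vanish identically, after relabeling I may assume $f_1\not\equiv 0$ and pick $x_1$ with $f_1(x_1)\neq 0$. I then set
\[
 g_i \,:=\, f_i \,-\, \frac{f_i(x_1)}{f_1(x_1)}\,f_1, \qquad i=2,\dots,n,
\]
so that $g_i(x_1)=0$. A short check shows that $g_2,\dots,g_n$ remain linearly independent on $D$: a nontrivial relation $\sum_{i\ge 2} c_i g_i=0$ would yield a nontrivial relation among $f_1,\dots,f_n$. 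By the induction hypothesis there exist $x_2,\dots,x_n\in D$ with $\det\bigl(g_i(x_k)\bigr)_{i,k=2}^n\neq 0$. Performing the corresponding row operations (subtracting $\tfrac{f_i(x_1)}{f_1(x_1)}$ times the first row from row $i$ for $i\ge 2$) inside $A$ turns the first column into $(f_1(x_1),0,\dots,0)^\top$ and the other entries of rows $2,\dots,n$ into $g_i(x_k)$. Expanding along the first column yields
\[
 \det\bigl(f_i(x_k)\bigr)_{i,k=1}^n \,=\, f_1(x_1)\cdot\det\bigl(g_i(x_k)\bigr)_{i,k=2}^n \,\neq\, 0,
\]
which completes the induction.

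There is no real obstacle here: the only care needed is to verify that the $g_i$ inherit linear independence from the $f_i$ (a two-line argument using that $f_1,\dots,f_n$ are independent). The rest is a standard row-reduction/Laplace expansion. An alternative I would mention is to view $(x_1,\dots,x_n)\mapsto \det(f_i(x_k))$ as a function on $D^n$ and show directly that it cannot vanish identically, using that otherwise the $f_i$ would be linearly dependent on any set where some $(n-1)\times (n-1)$ minor is nonzero; but the inductive construction above is the cleanest.
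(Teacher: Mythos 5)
Your proof is correct, but it takes a different route from the paper's. You reduce to $\det\bigl(f_i(x_k)\bigr)\neq 0$ and then build the points one at a time by induction: pick $x_1$ with $f_1(x_1)\neq 0$, eliminate $f_1$ from the remaining functions to get $g_2,\dots,g_n$ vanishing at $x_1$, verify these stay linearly independent, and conclude by Laplace expansion. All steps check out (in fact linear independence already forces every $f_i\not\equiv 0$, so no relabeling is needed). The paper instead argues globally: it considers $V:=\operatorname{span}\{F(x):x\in D\}\subset\C^n$ with $F=(f_1,\dots,f_n)^\top$, picks a basis $F(x_1),\dots,F(x_r)$ of $V$, and observes that the coordinate functionals $\lambda_1,\dots,\lambda_r$ of the expansion $F(x)=\sum_j\lambda_j(x)F(x_j)$ satisfy $\operatorname{span}\{f_1,\dots,f_n\}\subset\operatorname{span}\{\lambda_1,\dots,\lambda_r\}$, forcing $r=n$ and hence invertibility of $M=\bigl(f_i(x_j)\bigr)$. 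Both arguments are elementary; yours is a constructive Gaussian-elimination-style selection of the points, while the paper's dimension-counting argument avoids induction and determinant manipulations entirely and makes transparent why the chosen points exist (the vectors $F(x)$ must span all of $\C^n$). Either proof is acceptable here.
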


\begin{proof} Let $F=(f_1,\hdots,f_n)^\top$. The space
$$
V:= {\operatorname{span}} \{F(x) \colon \ x\in D \}\subset \C^n
$$
has a dimension 
$r\leq n$.
Then we have $r$ independent column vectors 
$F(x_1),\dots,F(x_r) \in V$  
such that for any $x\in D$ there exists a unique solution
$\lambda_1(x), \dots, \lambda_r(x)$, for which
\begin{equation*}
\begin{pmatrix}
f_1(x_1)  & f_1(x_2)  & \cdots & f_1(x_r) \\
\vdots  & \vdots  & \ddots & \vdots  \\
f_n(x_1)  & f_n(x_2) & \cdots & f_n(x_r)
\end{pmatrix}\cdot 
\begin{pmatrix}
\lambda_1(x) \\
\vdots \\
\lambda_r(x)
\end{pmatrix}
=
\begin{pmatrix}
f_1(x) \\
\vdots \\
f_n(x)
\end{pmatrix}\,.
\end{equation*}
This gives 
$$
{\operatorname{span}}_{\C} \{f_1, \dots, f_n \}
\subset {\operatorname{span}}_{\C} \{\lambda_1, \dots, \lambda_r\},
$$
so $r\ge n$ and hence $n=r$.
Since the coefficient matrix $M\in \C^{n\times r}$ in the previous linear system has full rank $r$,
it is invertible and
the matrix $\frac1n M M^{*}$
is positive definite. 
It only remains to note that $\frac1n M M^{*}$ equals
the Gram matrix of the uniform distribution on $\{x_1,\hdots,x_n\}$.\\
\end{proof}

%


\smallskip

\begin{proof}[Proof of Proposition~\ref{prop:BD}] {\em Step 1.} Let 
$F = (f_1,\hdots,f_n)^\top$ be a basis of $V_n$, and denote by $F^*$ its conjugate transpose.
For $x\in D$, we define the matrix
 \[
 H(x) := F(x) \cdot F(x)^{*} 
 \in \C^{n\times n},
 \]
 which is the Gram matrix of the Dirac measure $\delta_x$.
It is Hermitian and positive semi-definite 
 and its determinant therefore is real and satisfies $\det H(x) \geq 0$ for all $x\in D$.
We define the set of matrices 
\begin{equation}\label{eq:setH}
  \mathcal{H}:=\{H(x)\colon x\in D\}\,,
\end{equation}
which is bounded in $\C^{n\times n}$ due to the boundedness 
of the functions $f_1,...,f_n$.
Also the convex hull
\begin{align*}
   \convH :&= \bigg\{\sum_{k=1}^N \lambda_k H(x_k)\colon\, N\in \mathbb{N},\, \lambda_k \ge 0,\,\sum_{k=1}^N \lambda_k = 1,\, x_k\in D\bigg\} 
   \\
   &=\, \left\{ G_\varrho[F] \colon \varrho \in \mathcal M \right\}
\end{align*}
 remains a bounded subset of $\C^{n\times n}$. 
 By 
 {Caratheodory's theorem},
 {we can identify 
 $\convH$ with}
 \[
  \convH \,=\, \left\{ G_\varrho[F] \colon \varrho \in \mathcal M \text{ with at most } 
  {r}+1
  \text{ nodes}\right\},
 \]
 {where $r:=\dim({\rm span}(V_n\cdot \overline{V_n}))$,}
 {see \cite[Prop.\,2.8\,(ii) and Lem.\,2.6]{BKPSU2024}}
 {for details}.
The determinant is a continuous mapping on $\C^{n\times n}$. 
Hence, there exists a matrix $A\in \overline{\convH}$, 
i.e., the closure of $\convH$, 
such that 
\[
 \det(A)\,=\, \sup_{M\in\convH}\det(M) \,=\, \sup_{\varrho \in\mathcal M} \det(G_\varrho[F]) \,>\, 0.
\]
The positivity of the determinant follows from Lemma~\ref{lem1}. 

\medskip

{\em Step 2.}
We fix this maximizer of the determinant $A$ and define, 
for 
$x\in D$, $H(x)$ as above and for
$\alpha \in\R$,
the matrix
\begin{equation}\label{eq101}
C_x(\alpha):= (1-\alpha) \cdot A+\alpha \cdot H(x) = A + \alpha(H(x)-A), 
\end{equation}
which is contained in $\overline{\convH}$ for $\alpha \in [0,1]$.
 Clearly 
$$
	p_x(\alpha) = \det [C_x(\alpha)]
$$
is a real-valued polynomial in $\alpha$ and strictly positive for $\alpha \in [0,1)$,  
since the convex combination of a positive definite and positive semi-definite matrix is positive definite.

This polynomial, restricted to $[0,1]$, has 
a maximal value $\det A$ at the point $\alpha =0$, and hence for its derivative it holds
$p'_x(0) \leq 0$. Since this represents the polynomial's coefficient in front of $\alpha$, we get by the fact that the linear coefficient of $\det(I_n + \alpha B)$ is exactly ${\rm tr} B$ and $\det(A+\alpha(H(x)-A)) = \det A\cdot \det(I_n+\alpha A^{-1}(H(x)-A))$ the identity
\begin{equation*}
p_x'(0) = 
\det A \cdot {\rm tr}\big(A^{-1}(H(x)-A)\big)\,.
\end{equation*} 
By the invariance of the trace under circular shifts we obtain
\begin{align*}
{\rm tr}\big(A^{-1}(H&(x)-A)\big)  \,=\, {\rm tr}(A^{-1}H(x)-I_n)
\\
&=\, {\rm tr}(A^{-1}\cdot F(x)\cdot F(x)^*)-n
 \,=\, F(x)^* A^{-1} F(x) - n.
\end{align*}
From $p_x'(0) \le 0$ and $\det A>0$ it follows for all $x\in D$ that 
\be\label{FAF_leq_n}
 F(x)^* A^{-1} F(x) \leq n.
\ee

\medskip

{\em Step 3.}
Since $A\in \overline{\convH}$, there is a sequence of measures $\varrho_j \in \mathcal{M}$, each with at most {$
{r}+1$}
nodes, such that $G_{\varrho_j}[F]$ converges to $A$ for $j\to \infty$.
By continuity of the determinant and of matrix inversion,
this implies that $\det(G_{\varrho_j}[F]) \to \det(A)$ and $G_{\varrho_j}[F]^{-1} \to A^{-1}$.
In particluar, for any $\varepsilon>0$,
since $F(x)$ is bounded, we may choose $j$ large enough such that $\varrho=\varrho_j$ fulfills $\det(G_{\varrho_j}[F])>0$
and $\Vert G_{\varrho}[F]^{-1} - A^{-1} \Vert < \varepsilon/K$, where $K:=\sup_{x\in D} \Vert F(x) \Vert_2^2$.
This implies
\[
\sup_{x\in D}\, \left|F(x)^* \left(G_\varrho[F]^{-1}-A^{-1}\right) F(x)\right| \,<\, \varepsilon.
\]
Hence, using \eqref{FAF_leq_n}, we obtain for all $x\in D$ that
\begin{equation*}
     F(x)^* G_\varrho[F]^{-1} F(x)
     \,\le\, n + \varepsilon\,.
\end{equation*}

\smallskip


{\em Step 4.}
Under a change of basis $T\in\C^{n\times n}$ the Gram matrix becomes
\[
 G_\varrho[TF] \,=\, T \cdot G_\varrho[F] \cdot T^*.
\]
Thus, if we put $T:=G_\varrho[F]^{-1/2}$ (which is the unique positive definite square-root of $G_\varrho[F]^{-1}$ and hence $T^*=T$)
with $\varrho$ from Step~3, we get
\[
 G_\varrho\big[TF\big] \,=\, G_\varrho[F]^{-1/2} \cdot G_\varrho[F] \cdot G_\varrho[F]^{-1/2}
 \,=\, I_n,
\]
i.e., $TF$ is an orthonormal basis of $V_n$ in $L_2(\varrho)$.
Let $B=(b_1,\hdots,b_n)^\top$
be any orthonormal basis of $V_n$. 
For any $x\in D$ and $f=\sum_{k=1}^n c_k b_k$ with $c_k \in \C$,
it follows from the Cauchy-Schwarz inequality that
$\vert f(x)\vert \le \Vert f \Vert_{L_2(\varrho)} \cdot \Vert B(x) \Vert_2$ with equality for $c_k = b_k(x)$.
Hence,
\[
\sup\limits_{f \in V_n\setminus\{0\}} \frac{\|f\|_\infty^2}{\|f\|_{L_2(\varrho)}^2} 
\,=\, \sup_{x\in D} \sup\limits_{f \in V_n\setminus\{0\}} \frac{|f(x)|^2}{\|f\|_{L_2(\varrho)}^2} 
\,=\, \sup_{x\in D} \Vert B(x) \Vert_2^2. 
\]
By plugging in the basis $B=TF$, this equals
\[
\sup_{x\in D}\, (T F(x))^* (T F(x))
 \,=\, \sup_{x\in D}\, F(x)^* G_\varrho[F]^{-1} F(x)
 \,\le\, n + \varepsilon,
\]
which concludes the proof.

Note that in the case $A\in\convH$ (e.g., for continuous $F$ on a compact topological space $D$) the proof above simplifies. We immediately
put $T:=A^{-1/2}$  and consider the respective basis $B=TF$. In view of (\ref{FAF_leq_n}), we get that  $\varepsilon=0$.
\end{proof}

\medskip

\section{Discretization via subsampling}\label{sec:disc}
We combine Proposition~\ref{prop:BD} with a result from \cite{BSU} 
on the discretization of the $L_2$-norm. 
For this we need to restrict to measurable functions, 
and therefore denote, for a given measure space $(D,\mu)$, 
by $B_\mu(D) \subset B(D)$ the space of 
bounded $\mu$-measurable functions with the $\sup$-norm.

\begin{lemma}[{\cite[Thm.~6.2]{BSU}}] \label{lem:disc2}
    Let $(D,\mu)$ be a probability space and let $V_n$ be an $n$-dimensional subspace of $B_\mu(D)$. 
    Then there are points $x_1,\hdots,x_{2n} \in D$
    such that, for all $f\in V_n$, we have 
    \[
     \Vert f \Vert_{L_2(\mu)} 
     \,\le\, 
     57 \, \bigg(\frac{1}{2n} \sum_{k=1}^{2n} |f(x_k)|^2\bigg)^{1/2}.
    \]
\end{lemma}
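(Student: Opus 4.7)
The plan is to recast the inequality as a lower spectral bound on an empirical Gram matrix, and then to produce the $2n$ sampling points via a two-stage scheme that combines random sampling with a Batson--Spielman--Srivastava (BSS) type deterministic subsampling in the spirit of \cite{BSS}.

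Concretely, fix an $L_2(\mu)$-orthonormal basis $b_1,\dots,b_n$ of $V_n$ and set $B(x):=(b_1(x),\dots,b_n(x))^\top$, so that orthonormality gives $\int_D B(x)B(x)^*\,d\mu(x)=I_n$. For every $f=\sum_k c_k b_k\in V_n$ one has $\|f\|_{L_2(\mu)}^2=\|c\|_2^2$ and
\[
\frac{1}{2n}\sum_{k=1}^{2n}|f(x_k)|^2 \,=\, c^*M\,c,
\qquad
M\,:=\,\frac{1}{2n}\sum_{k=1}^{2n}B(x_k)B(x_k)^*.
\]
The target inequality is therefore equivalent to producing $2n$ points $x_1,\dots,x_{2n}\in D$ with $\lambda_{\min}(M)\ge 1/57^2$.

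For the construction, I would first draw $m=\lceil C\,n\log n\rceil$ i.i.d. samples from $\mu$ (possibly modulated by a density derived from a truncated Christoffel function $\tau(x):=\|B(x)\|_2^2$, so as to cap the per-sample operator norm). Tropp's matrix Chernoff inequality then yields, with positive probability, an intermediate empirical matrix $M_0$ satisfying $\tfrac12 I_n\preceq M_0\preceq \tfrac32 I_n$. In the second stage, I would apply a BSS-type unweighted deterministic selection to the rank-one summands of $M_0$: the algorithm picks $2n$ of the $m$ atoms such that $\lambda_{\min}(M)\ge c_0$ for a universal constant $c_0$. Composing the two stages gives the desired lower spectral bound with an absolute constant, which on optimisation of the parameters can be tracked down to $1/57^2$.

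The main obstacle, and what forces the two-stage approach, is the possibly unbounded Christoffel function $\tau$: without an a priori pointwise bound on $\tau$, neither matrix Chernoff nor a one-shot BSS selection from i.i.d.\ $\mu$-samples can be carried out with only $O(n)$ points. The standard remedy is a splitting argument, decomposing $D$ into $\{\tau\le K\}$ and $\{\tau>K\}$ with $K\asymp n$; on the good part the Chernoff--BSS pipeline works efficiently, while the tail carries $\mu$-mass at most $n/K$ and contributes negligibly to any $f\in V_n$. The final numerical constant $57$ then comes out of carefully tracking the eccentricities through matrix Chernoff, through the sharp form of BSS with $\epsilon=1$, and through the truncation level $K$.
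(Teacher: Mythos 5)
The paper does not actually reprove this lemma: its ``proof'' is a two-line citation of \cite[Thm.~6.2]{BSU} (applied with $b=2$ and $t$ close to $0$), and the surrounding text only describes the mechanism informally, namely $\mathcal{O}(n\log n)$ i.i.d.\ points followed by a constructive BSS-type subsampling down to $2n$ points. Your architecture --- reduction to $\lambda_{\min}(M)\ge 1/57^2$ for the empirical Gram matrix, a matrix-Chernoff stage producing $\tfrac12 I_n\preceq M_0\preceq\tfrac32 I_n$, then deterministic subsampling --- is exactly the architecture of the cited result, so at this level you are on the same route the paper relies on.

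Two steps of your sketch, however, do not work as written. First, the truncation argument: Markov does give $\mu(\{\tau>K\})\le n/K$, but it is false that $\{\tau>K\}$ ``contributes negligibly to any $f\in V_n$'' --- take $V_n$ spanned by normalized indicators of sets of tiny $\mu$-measure; then $\tau$ is huge precisely where such an $f$ lives, and $f$ carries \emph{all} of its $L_2(\mu)$ mass in the tail. The remedy used in \cite{BSU} (and its predecessor \cite{LT}) is not a splitting of $D$ but importance sampling with respect to the mixed density $d\nu=\tfrac12\,(1+\tau/n)\,d\mu$: this caps the norm of the reweighted vectors $\sqrt{w(x)}\,B(x)$ (so matrix Chernoff applies with $\mathcal{O}(n\log n)$ samples) while keeping the weights $w=d\mu/d\nu$ bounded above by the absolute constant $2$. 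Second, you never explain how to return from the \emph{weighted} empirical matrix that importance sampling produces to the \emph{unweighted} average $\frac{1}{2n}\sum_k|f(x_k)|^2$ that the lemma requires; this is exactly where the one-sidedness of the statement is essential. Since $w\le 2$, one has $\sum_k|f(x_k)|^2\ge\tfrac12\sum_k w(x_k)|f(x_k)|^2$, so discarding the weights preserves the lower frame bound up to a factor $2$ --- whereas a two-sided unweighted bound with $\mathcal{O}(n)$ points is impossible in general when $\tau$ is unbounded. Consequently the BSS-type selection in your second stage must be run in its one-sided (lower-bound-only) form, as in \cite[Thm.~6.1--6.2]{BSU}; with these two repairs your outline matches the actual proof of the cited theorem.
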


\begin{proof}
    We apply \cite[Thm.~6.2]{BSU} with $b=2$ and $t$ close to zero. 
    (Note that \cite[Thm.~6.2]{BSU} 
    incorrectly omitted the assumption 
    that $\mu$ is a  probability measure.)\\
\end{proof}

We refer to {
\cite{DPTT19,Gr19,KKLT,Limonova_Malykhin_Temlyakov24,Rud99,T23}} for some history and applications of such discretization results, 
and~\cite{LT} for a direct predecessor. 
Many
of these results deal with \emph{two-sided} bounds between continuous and discrete norms. 
The point set in Lemma~\ref{lem:disc2} is obtained by first sampling $\mathcal{O}(n\log n)$ i.i.d.\ random points and a constructive sub-sampling procedure originating in~\cite{BSS} and further elaborated in \cite{BSU}.

\subsection{Discretization results for the sup-norm} 
We start with the proof of Theorem~\ref{thm:disc}, which we restate here for convenience.

\begin{theorem}\label{thm:disc_sup} 
Let $D$ be a set and $V_n$ be an $n$-dimensional subspace of $B(D)$.
Then there are points $x_1,\hdots,x_{2n} \in D$ such that, for all $f\in V_n$, we have
$$
\|f\|_{\infty} \,\leq\, 58 \sqrt{n} \,\bigg(\frac{1}{2n} \sum_{k=1}^{2n} \abs{f(x_k)}^2 \bigg)^{1/2}\,
\leq 42 \bigg(\sum_{k=1}^{2n} \abs{f(x_k)}^2 \bigg)^{1/2}
.
$$
%
%
\end{theorem}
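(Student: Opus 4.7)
The plan is to combine the two main ingredients that the paper has already assembled: Proposition~\ref{prop:BD}, which yields a finitely supported probability measure $\varrho$ on $D$ with $\|f\|_\infty \le \sqrt{n+\varepsilon}\,\|f\|_{L_2(\varrho)}$ on $V_n$, and Lemma~\ref{lem:disc2}, which subsamples any probability measure down to $2n$ points while controlling the $L_2$-norm by a factor $57$. Chaining these two inequalities produces the discretization by $2n$ sampling points claimed in the theorem, and the constant $58\sqrt{n}$ will drop out of a judicious choice of $\varepsilon$.

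In detail, I would first fix a small $\varepsilon>0$ (for definiteness, small enough that $\sqrt{1+\varepsilon/n}\le 58/57$, e.g.\ $\varepsilon=n/100$) and apply Proposition~\ref{prop:BD} to obtain distinct points $y_1,\dots,y_N\in D$ with $N\le n^2+1$ and weights $\lambda_1,\dots,\lambda_N\ge 0$ summing to $1$, such that $\varrho:=\sum_{i=1}^N \lambda_i\,\delta_{y_i}$ satisfies
\begin{equation*}
 \|f\|_{\infty} \,\le\, \sqrt{n+\varepsilon}\,\|f\|_{L_2(\varrho)}\qquad\text{for all }f\in V_n.
\end{equation*}
Next I would apply Lemma~\ref{lem:disc2} to the probability space $(D,\varrho)$. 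Since $\varrho$ is supported on the finite set $\{y_1,\dots,y_N\}$, we can equip $D$ with the $\sigma$-algebra generated by these singletons (together with their complement), making every function in $B(D)$ trivially $\varrho$-measurable; hence $V_n\subset B_\varrho(D)$ and Lemma~\ref{lem:disc2} applies. This produces points $x_1,\dots,x_{2n}\in D$ (necessarily in the support of $\varrho$) with
\begin{equation*}
 \|f\|_{L_2(\varrho)} \,\le\, 57\,\Bigl(\tfrac{1}{2n}\sum_{k=1}^{2n}|f(x_k)|^2\Bigr)^{1/2}\qquad\text{for all }f\in V_n.
\end{equation*}

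Combining the two bounds yields
\begin{equation*}
 \|f\|_\infty \,\le\, 57\sqrt{n+\varepsilon}\,\Bigl(\tfrac{1}{2n}\sum_{k=1}^{2n}|f(x_k)|^2\Bigr)^{1/2},
\end{equation*}
and the choice of $\varepsilon$ above gives $57\sqrt{n+\varepsilon}\le 58\sqrt{n}$, which is the first inequality. The second inequality follows by absorbing the factor $1/\sqrt{2n}$: we have $58\sqrt{n}/\sqrt{2n}=58/\sqrt{2}<42$.

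The only real obstacle is a bookkeeping one, namely verifying that Lemma~\ref{lem:disc2} is legitimately applicable to the measure produced by Proposition~\ref{prop:BD}, because the former is stated for bounded $\mu$-measurable functions while Proposition~\ref{prop:BD} does not assume any measurable structure on $D$. This is harmless once one notes that $\varrho$ has finite support, so any reasonable $\sigma$-algebra containing the atoms renders $V_n\subset B_\varrho(D)$. Once this is settled, the rest is constant-tracking.
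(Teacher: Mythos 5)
Your proposal is correct and follows essentially the same route as the paper: apply Proposition~\ref{prop:BD} with $\varepsilon$ small enough that $\sqrt{n+\varepsilon}\le (58/57)\sqrt{n}$, then subsample via Lemma~\ref{lem:disc2}, and chain the two inequalities. Your additional remark on equipping $D$ with the $\sigma$-algebra generated by the finitely many atoms of $\varrho$ is a legitimate (and correctly resolved) bookkeeping point that the paper passes over silently.
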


\medskip

\begin{proof}[Proof of Theorems~\ref{thm:disc} and \ref{thm:disc_sup}]
Proposition~\ref{prop:BD} gives an atomic probability measure $\mu$ such that $\Vert \cdot \Vert_\infty \le t \sqrt{n} \Vert \cdot \Vert_{L_2(\mu)}$ on $V_n$,
where we can choose $t=58/57$.
Lemma~\ref{lem:disc2} further gives an atomic probability measure $\varrho$ of the form $\frac{1}{2n} \sum_{k=1}^{2n} \delta_{x_k}$
such that $\Vert \cdot \Vert_{L_2(\mu)} \le 57 \Vert \cdot \Vert_{L_2(\varrho)}$ on $V_n$. \\
%
\end{proof}

\medskip

It is worth noting that no measure is needed in the formulation of Theorem~\ref{thm:disc_sup}. It only appears in the proof. To illustrate this, let us consider the space of bounded sequences $\ell_\infty := \ell_\infty(\mathbb{N})$. 
Theorem \ref{thm:disc_sup} then asserts that 
for any finite dimensional subspace $V_n$ of $\ell_\infty$ we have $2n$ indices $(i_k)_{k=1}^{2n} \subset \mathbb{N}$ with 
$$
    \sup_{i \in \N}|x_i| \;\leq\; 
   42
    \left(
    \sum\limits_{k=1}^{2n} |x_{i_k}|^2
    \right)^{\frac{1}{2}}, 
    \quad \text{ for all }\quad (x_i)_{i=1}^\infty \in V_n \subset \ell_\infty\,.
$$

\medskip

\subsection{Discretization results for the $L_p$-norm}
By interpolation, we get a discretization result similar to Theorem \ref{thm:disc_sup} also for the $L_p$-norm, $2\leq p \leq \infty$. 
For this, we use the set $Z=P\cup X$ consisting of at most $4n$ points, 
where $P$ is the set of $2n$ ``good'' points for the $L_2$-discretization from Lemma~\ref{lem:disc2} 
and $X$ is the set of
$2n$ ``good'' points for the 
sup-norm from
Theorem \ref{thm:disc_sup}. 

\begin{theorem}\label{thm:disc_Lp}
Let $(D,\mu)$ be a probability space and let $V_n$ be an $n$-dimensional subspace of $B_\mu(D)$, the space of bounded measurable 
functions on $D$.
Then there are points $x_1,\hdots,x_{4n} \in D$
such that, for all $f\in V_n$
and all $2\le p \le \infty$, we have
$$
\|f\|_{L_p(\mu)} \leq
83 \cdot  n^{\frac{1}{2}-\frac{1}{p}} \cdot \Big(\frac{1}{4n} \sum_{k=1}^{4n} \abs{f(x_k)}^2 \Big)^{1/2}\,.
 $$
\end{theorem}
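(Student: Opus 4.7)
The plan is to interpolate between the endpoints $p=2$ and $p=\infty$ that are already covered by Lemma~\ref{lem:disc2} and Theorem~\ref{thm:disc_sup}, after arranging both inequalities over a common point set.

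First I would let $P=\{p_1,\dots,p_{2n}\}$ be the $L_2$-Marcinkiewicz points obtained from Lemma~\ref{lem:disc2} and $X=\{x_1,\dots,x_{2n}\}$ be the sup-norm discretization points from Theorem~\ref{thm:disc_sup}, and set $Z:=P\cup X$, which has cardinality at most $4n$ (if fewer, duplicate arbitrary points to reach exactly $4n$, which only increases the sum). Writing $S:=\frac{1}{4n}\sum_{z\in Z}|f(z)|^2$, the bound over $Z$ dominates each individual sum up to a factor of at most $2$, namely $\frac{1}{2n}\sum_{k}|f(p_k)|^2\le 2S$ and $\frac{1}{2n}\sum_{k}|f(x_k)|^2\le 2S$. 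Thus Lemma~\ref{lem:disc2} and Theorem~\ref{thm:disc_sup} yield
\[
\|f\|_{L_2(\mu)}\le 57\sqrt{2}\,S^{1/2}
\qquad\text{and}\qquad
\|f\|_\infty\le 58\sqrt{2n}\,S^{1/2}
\]
for all $f\in V_n$.

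Next I would apply the elementary log-convexity inequality (Lyapunov's inequality) for probability measures,
\[
\|f\|_{L_p(\mu)}\;\le\;\|f\|_{L_2(\mu)}^{2/p}\cdot\|f\|_\infty^{1-2/p},\qquad 2\le p\le\infty,
\]
which follows from $|f|^p=|f|^2\cdot|f|^{p-2}\le|f|^2\|f\|_\infty^{p-2}$ and monotone integration. Substituting the two discretization bounds above gives
\[
\|f\|_{L_p(\mu)}\;\le\;\bigl(57\sqrt{2}\bigr)^{2/p}\bigl(58\sqrt{2}\bigr)^{1-2/p}\cdot n^{1/2-1/p}\cdot S^{1/2}.
\]

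Finally I would estimate the constant uniformly in $p\in[2,\infty]$. Since $57\sqrt{2}<58\sqrt{2}<82.03$, the factor $(57\sqrt{2})^{2/p}(58\sqrt{2})^{1-2/p}$ is bounded by $58\sqrt{2}<83$ for every $p\in[2,\infty]$, yielding the claimed inequality. There is no genuine obstacle here; the only point requiring a little care is the bookkeeping of the constants and the reduction of both endpoint discretizations to the common set $Z$ (where the factor $2$ loss is what forces $4n$ rather than $2n$ points).
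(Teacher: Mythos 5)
Your proposal is correct and follows essentially the same route as the paper: interpolate $\|f\|_{L_p(\mu)}\le\|f\|_{L_2(\mu)}^{2/p}\|f\|_\infty^{1-2/p}$ between the $L_2$-discretization of Lemma~\ref{lem:disc2} and the sup-norm discretization of Theorem~\ref{thm:disc_sup}, then pass to the combined $4n$-point (multi)set at the cost of a factor $\sqrt{2}$, giving the constant $58\sqrt{2}<83$. The only cosmetic difference is the order of steps (you merge the point sets before interpolating, the paper after), which changes nothing.
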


\medskip

Again, the oversampling factor $4$ may be reduced leading to a larger constant, see Remark~\ref{rem:const}. 
However, since the proof relies on the union of two point sets, our method does not allow for 
a factor smaller than~2.

\medskip

\begin{proof}
  Interpolation for $2\leq p \leq \infty$ gives for all $f\in V_n$  that
$$ \Vert  f \Vert _{L_p(\mu)}  \leq 
 \Vert  f  \Vert_{L_2(\mu)}^{\frac{2}{p}} \cdot  \Vert  f \Vert_{\infty}^{1-\frac{2}{p}} .
$$
If we let $x_1,\hdots,x_{2n}$ be the points from Lemma~\ref{lem:disc2} with corresponding measure $\varrho_1=\frac{1}{2n} \sum_{k=1}^{2n} \delta_{x_k}$ and $x_{2n+1},\hdots,x_{4n}$ be the points from Theorem~\ref{thm:disc_sup} with corresponding measure $\varrho_2=\frac{1}{2n} \sum_{k=2n+1}^{4n}\delta_{x_k}$, we get 
$$ \Vert  f \Vert _{L_p(\mu)}  \,\leq\, 58n^{\frac12 - \frac1p} \cdot
 \Vert  f  \Vert_{L_2(\varrho_1)}^{\frac{2}{p}} \cdot  \Vert  f \Vert_{L_2(\varrho_2)}^{1-\frac{2}{p}} .
$$
It remains to observe that for $\varrho = \frac{\varrho_1 + \varrho_2}{2}$,
we have $\Vert \cdot \Vert_{L_2(\varrho_i)}\le \sqrt{2} \Vert \cdot \Vert_{L_2(\varrho)}$,
so that 
\[
 \Vert  f \Vert _{L_p(\mu)}  
 \,\leq\, 58\sqrt{2}\, n^{\frac12 - \frac1p} \cdot
 \Vert  f  \Vert_{L_2(\varrho)}.
\]
\end{proof}

\section{$L_p$-errors of least-squares algorithms}
\label{sec:LS}

We now consider the error of least-squares approximations based on the sampling points from above. 
We refer to~\cite{CL08,CM,Gr19,KKLT} and references therein for results on least-squares in related settings.

As above we consider an $n$-dimensional subspace $V_n$ 
of the space $B(D)$ of bounded complex-valued functions on a set $D$.
Further, let $\varrho$ be a discrete probability measure of the form
\[
 \varrho \,=\, \sum_{x\in X} \lambda_x \delta_x
\]
with a finite multiset $X\subset D$
and $\lambda_x \ge 0$ such that $\sum_{x\in X} \lambda_x =1$,
i.e., $\varrho \in \mathcal M$ with $\mathcal M$ as defined in \eqref{eq:def_measures}.
The expression $\langle \cdot, \cdot \rangle_{L_2(\varrho)}$ is a positive semi-definite Hermitian form on $B(D)$.
We assume that there is some $K>0$ such that
\begin{equation}\label{cond:disc}
\Vert f \Vert_\infty \le K \Vert f \Vert_{L_2(\varrho)}\ ,
\quad f\in V_n.
\end{equation}
Then, in particular, $\Vert \cdot \Vert_{L_2(\varrho)}$ is a norm on $V_n$.
Hence, there exists a unique orthogonal projection 
$A(\varrho,V_n)$ on $B(D)$ with range $V_n$
and it takes the form
of a weighted least squares algorithm,
\[
  A(\varrho,V_n)(f) 
  \,=\,  \underset{g \in V_n}{\rm argmin}\ \Vert g-f \Vert_{L_2(\varrho)}
  \,=\, \underset{g \in V_n}{\rm argmin}\ \sum_{x \in X} \lambda_x\, |g(x)-f(x)|^2.
\]
As such, it is linear and satisfies
\[
 \Vert A(\varrho,V_n) f\Vert_{L_2(\varrho)} \,\le\, \Vert f \Vert_{L_2(\varrho)},
 \quad f\in B(D).
\]
Moreover, since $A(\varrho,V_n) f$ only depends on the values of $f$ at $X$,
there exist functions $\varphi_x \in V_n$
such that
\[
 A(\varrho,V_n)(f)  = \sum_{x\in X} f(x) \varphi_x,
 \quad f\in B(D).
\]
In the case of equal weights,
i.e., for the measure $\varrho_X := \frac{1}{|X|} \sum_{x\in X} \delta_x$,
we 
obtain the norm $\Vert \cdot \Vert_X := \Vert \cdot \Vert_{L_2(\varrho_X)}$ from \eqref{eq:def-seminorm}
and write
\[
A(X,V_n)(f) \,:=\, A\left( \varrho_X, V_n \right)(f)
\,=\, \underset{g \in V_n}{\rm argmin}\ \sum_{x \in X} |g(x)-f(x)|^2.
\]

If we choose $X$ as the discretization set in Theorem~\ref{thm:disc_sup},
the operator $A(X,V_n)$ provides the sampling projection of Theorem~\ref{thm:KS-new}.
More generally, we get the following.
\smallskip

\begin{lemma} \label{lem:sampling-projections-general}
    Let $D$ be a set and $V_n$ be a finite-dimensional subspace of $B(D)$.
    Moreover, let $\varrho \in \mathcal M$ satisfy \eqref{cond:disc}.
    Then $A(\varrho,V_n)$ is a projection onto $V_n$ with $\Vert A(\varrho,V_n) \Vert_{\mathcal L(B(D))} \le K$.
\end{lemma}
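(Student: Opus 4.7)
The plan is to chain together three simple inequalities: the discretization bound \eqref{cond:disc} applied to elements of $V_n$, the contractivity of $A(\varrho,V_n)$ in the $L_2(\varrho)$-seminorm, and the trivial bound $\|\cdot\|_{L_2(\varrho)} \le \|\cdot\|_\infty$ coming from the fact that $\varrho$ is a probability measure.

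First I would verify that $A(\varrho,V_n)$ is genuinely well-defined as a projection onto $V_n$. The assumption \eqref{cond:disc} means that any $f \in V_n$ with $\|f\|_{L_2(\varrho)}=0$ satisfies $\|f\|_\infty=0$, hence $f=0$. Therefore $\langle \cdot,\cdot\rangle_{L_2(\varrho)}$ restricts to an honest inner product on $V_n$, and the discussion preceding the lemma already ensures that $A(\varrho,V_n)$ is a well-defined linear map with range $V_n$ and with $A(\varrho,V_n) g = g$ for $g \in V_n$; so it is a projection onto $V_n$.

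For the norm bound, fix $f \in B(D)$ and set $g := A(\varrho,V_n) f \in V_n$. Since $g \in V_n$, the discretization assumption yields
\[
 \|g\|_\infty \,\le\, K\, \|g\|_{L_2(\varrho)}.
\]
From the contractivity property $\|A(\varrho,V_n) f\|_{L_2(\varrho)} \le \|f\|_{L_2(\varrho)}$ stated in the paragraph above, we get $\|g\|_{L_2(\varrho)} \le \|f\|_{L_2(\varrho)}$. Finally, since $\varrho$ is a probability measure,
\[
 \|f\|_{L_2(\varrho)}^2 \,=\, \sum_{x\in X} \lambda_x |f(x)|^2 \,\le\, \|f\|_\infty^2.
\]
Concatenating these three estimates gives $\|A(\varrho,V_n) f\|_\infty \le K \|f\|_\infty$ for every $f \in B(D)$, which is the desired operator norm bound.

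There is no real obstacle here; the only subtle point is to keep track of why \eqref{cond:disc}, which is stated only for $f \in V_n$, suffices to bound $\|A(\varrho,V_n)f\|_\infty$ for arbitrary $f \in B(D)$. This works precisely because $A(\varrho,V_n)f$ already lies in $V_n$, so the discretization inequality can be applied to it even though $f$ itself is only in $B(D)$.
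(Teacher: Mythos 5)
Your argument is correct and is essentially identical to the paper's own proof: both establish well-definedness from \eqref{cond:disc} forcing $\Vert\cdot\Vert_{L_2(\varrho)}$ to be a norm on $V_n$, and then chain $\Vert Pf\Vert_\infty \le K\Vert Pf\Vert_{L_2(\varrho)} \le K\Vert f\Vert_{L_2(\varrho)} \le K\Vert f\Vert_\infty$. No differences worth noting.
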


\begin{proof}[Proof of Lemma~\ref{lem:sampling-projections-general} and Theorem~\ref{thm:KS-new}]
    As $\Vert \cdot \Vert_\infty$ is a norm on $V_n$, also $\Vert \cdot \Vert_{L_2(\varrho)}$ must be a norm
    and thus $P=A(\varrho,V_n)$ is a well-defined linear projection onto $V_n$.
    Moreover, we have for all $f\in B(D)$ that
    \[
     \Vert Pf \Vert_\infty \,\le\, K \Vert Pf \Vert
     _{L_2(\varrho)}
     \,\le\, K \Vert f \Vert
     _{L_2(\varrho)}
     \,\le\, K \Vert f \Vert_\infty.
    \]
    
    To arrive at Theorem~\ref{thm:KS-new}, we choose $\varrho$ as the uniform distribution on the $2n$-point set from Theorem~\ref{thm:disc_sup}.
\end{proof}

\begin{remark}[Sampling projections with {norm close to $\sqrt{n}$}] 
    We can use the measure $\varrho$ from Proposition~\ref{prop:BD} instead of the points from Theorem~\ref{thm:disc_sup}
    and get that the weighted least squares algorithm $A(\varrho,V_n)$, which uses at most $n^2+1$ nodes, is a sampling projection with norm at most $(1+\varepsilon)\sqrt{n}$
    for any given $\varepsilon>0$.
\end{remark}

\smallskip

We now discuss the error of the least-squares algorithm $A(X,V_n)$ in different norms $\Vert \cdot \Vert$.
We require that the norm  is dominated by the sup-norm 
and that the norm $\Vert \cdot \Vert_X$ from \eqref{eq:def-seminorm} provides good discretization for this norm on $V_n$.
Then one easily gets the following error bound for the least-squares algorithm in 
the norm $\Vert \cdot \Vert$.
Note that the case $\Vert \cdot \Vert = \Vert \cdot \Vert_{L_2(\mu)}$ is contained already in \cite{T20} (with $(1+C)$ replaced by $(1+2C)$).
{We also note that the constant $C$ may depend on the dimension $n$ of the space $V_n$.}

\begin{lemma}\label{lem:general-norm}
    Let $D$ be a set and $G$ be a subspace of $B(D)$
    and let $\Vert \cdot \Vert$ be a seminorm on $G$
    such that $\Vert \cdot \Vert \le \Vert \cdot \Vert_\infty$ on $G$.
    Moreover, let $V_n$ be a finite-dimensional subspace of $G$
    and let $X \subset D$ be a finite multiset
    such that $\Vert \cdot \Vert_X$ is a norm on $V_n$
    and $\Vert \cdot \Vert \le C \Vert \cdot \Vert_X$ on $V_n$ with some $C>0$.
    Then, the least-squares algorithm $A(X,V_n)$ is well defined on $B(D)$ and, for all $f\in G$, we have
    \[
     \Vert f - A(X,V_n)f \Vert
     \;\le\;
     (1+C) \,\inf_{g\in V_n} \Vert f - g \Vert_\infty.
    \]
    \end{lemma}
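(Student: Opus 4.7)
The plan is a direct triangle-inequality argument built around the projection property of $A(X,V_n)$. First I would verify that $A(X,V_n)$ is well defined on all of $B(D)$: its definition depends only on $f(x)$ for $x\in X$, which exist for any $f\in B(D)$, and the hypothesis that $\Vert\cdot\Vert_X$ is a norm on $V_n$ ensures uniqueness of the minimizer by the standard theory of orthogonal projection with respect to the inner product associated to $\Vert\cdot\Vert_X$. In particular $A(X,V_n)$ is linear and $A(X,V_n)g=g$ for every $g\in V_n$.

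The key step is to establish the operator-norm bound
\[
 \Vert A(X,V_n) h \Vert \,\le\, C\,\Vert h\Vert_\infty,
 \qquad h\in B(D).
\]
Since $A(X,V_n)h\in V_n$, the hypothesis $\Vert\cdot\Vert\le C\Vert\cdot\Vert_X$ on $V_n$ gives $\Vert A(X,V_n)h\Vert\le C\Vert A(X,V_n)h\Vert_X$. Because $A(X,V_n)$ is the orthogonal projection onto $V_n$ with respect to $\Vert\cdot\Vert_X$, it is a contraction in this seminorm, so $\Vert A(X,V_n)h\Vert_X\le\Vert h\Vert_X$. Finally, $\Vert h\Vert_X^2=\frac{1}{|X|}\sum_{x\in X}|h(x)|^2\le\Vert h\Vert_\infty^2$, yielding the claim.

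With this in hand, for any $f\in G$ and any $g\in V_n$, use $A(X,V_n)g=g$ to write
\[
 f-A(X,V_n)f \,=\, (f-g)-A(X,V_n)(f-g),
\]
and estimate the two summands separately. The first is bounded in $\Vert\cdot\Vert$ by $\Vert f-g\Vert\le\Vert f-g\Vert_\infty$ via the hypothesis $\Vert\cdot\Vert\le\Vert\cdot\Vert_\infty$ on $G$. The second satisfies $\Vert A(X,V_n)(f-g)\Vert\le C\Vert f-g\Vert_\infty$ by the key step above (applied to $h=f-g\in B(D)$). Adding the two bounds and taking the infimum over $g\in V_n$ delivers the asserted $(1+C)$-factor.

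There is no real obstacle here; the only points requiring care are that the contraction property of $A(X,V_n)$ and the hypothesis $\Vert\cdot\Vert\le C\Vert\cdot\Vert_X$ are only available on $V_n$, which is why one must first project $f-g$ into $V_n$ before invoking them, and that $\Vert\cdot\Vert_X\le\Vert\cdot\Vert_\infty$ holds on all of $B(D)$ by the trivial averaging bound, so that the second estimate transfers cleanly from $V_n$-values to the sup-norm of $f-g$.
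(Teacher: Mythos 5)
Your argument is correct and coincides with the paper's proof: both decompose via a competitor $g\in V_n$ (your identity $f-A(X,V_n)f=(f-g)-A(X,V_n)(f-g)$ is the same as the paper's triangle inequality through $g$, since $g-A(X,V_n)f=-A(X,V_n)(f-g)$), then use $\Vert\cdot\Vert\le C\Vert\cdot\Vert_X$ on $V_n$, the $\Vert\cdot\Vert_X$-contractivity of the orthogonal projection, and the trivial bound $\Vert\cdot\Vert_X\le\Vert\cdot\Vert_\infty$. No differences worth noting.
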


   \begin{proof}  We obtain for any $f\in G$ and any $g\in V_n$ that
 \begin{align*}
     \Vert f - A&(X,V_n) f \Vert
     \,\le\, \Vert f - g \Vert + \Vert g - A(X,V_n) f \Vert\\
     &\le\, \Vert f - g \Vert_\infty + 
     C\, \Vert g - A(X,V_n) f \Vert_X\\
     &=\, \Vert f - g \Vert_\infty + 
     C\,  \Vert A(X,V_n)(g -  f) \Vert_X\\
     &\le\, \Vert f - g \Vert_\infty +  C\, \Vert g - f \Vert_X\\
     &\le\, (1+C)\cdot \Vert g - f \Vert_\infty.
 \end{align*}
 The statement is obtained by taking the infimum over $g\in V_n$. \\
\end{proof}


Together with our discretization result for the sup-norm, 
this implies the error bound that we stated in the introduction.

\begin{proof}[Proof of Theorem~\ref{thm:LS_sup}]
Combine Theorem~\ref{thm:disc_sup} and 
Lemma~\ref{lem:general-norm} with $G=B(D)$ 
and $\|\cdot\|:=\|\cdot\|_\infty$.\\
\end{proof}

We can also insert the discretization
result from Theorem~\ref{thm:disc_Lp}
to obtain the following more general estimate
in the $p$-norm.

\begin{theorem}\label{thm:LS_Lp}
Let $(D,\mu)$ be a probability space and let $V_n$ be an $n$-dimensional subspace of $B_\mu(D)$. 
Then there are points $x_1,\hdots,x_{4n} \in D$
such that, for all $f\in B_\mu(D)$ and all $1\le p \le \infty$, we have
\[
     \Vert f - A(X,V_n)f \Vert_{L_p(\mu)}
     \,\le\,
     84\, n^{(1/2-1/p)_+} \,\inf_{g\in V_n} \Vert f - g \Vert_\infty,
\]
where $a_+:=\max\{a,0\}$ for $a\in \R$.
\end{theorem}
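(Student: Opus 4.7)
The plan is to combine the discretization result of Theorem~\ref{thm:disc_Lp} with the abstract error bound of Lemma~\ref{lem:general-norm}, then handle the range $1\le p<2$ by a simple monotonicity argument using that $\mu$ is a probability measure.

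First, I would take $X$ to be the $4n$-point multiset produced by Theorem~\ref{thm:disc_Lp}. Since the proof of Theorem~\ref{thm:disc_Lp} took $X$ as the union of the $2n$ sup-norm discretization points from Theorem~\ref{thm:disc_sup} with the $2n$ $L_2$-discretization points from Lemma~\ref{lem:disc2}, the subset coming from Theorem~\ref{thm:disc_sup} alone already gives $\|\cdot\|_\infty\le c\sqrt n\,\|\cdot\|_X$ on $V_n$ (up to the factor $2$ from spreading the mass). In particular $\|\cdot\|_X$ is a norm on $V_n$, so $A(X,V_n)$ is a well-defined linear projection on $B(D)$.

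Second, for the main range $2\le p\le\infty$, I would apply Lemma~\ref{lem:general-norm} with $G=B_\mu(D)$ and seminorm $\|\cdot\|=\|\cdot\|_{L_p(\mu)}$. The hypothesis $\|\cdot\|_{L_p(\mu)}\le\|\cdot\|_\infty$ holds trivially because $\mu$ is a probability measure. The other hypothesis, $\|\cdot\|_{L_p(\mu)}\le C\|\cdot\|_X$ on $V_n$ with $C=83\,n^{1/2-1/p}$, is exactly the content of Theorem~\ref{thm:disc_Lp}. Lemma~\ref{lem:general-norm} then yields
\[
  \|f-A(X,V_n)f\|_{L_p(\mu)} \,\le\, (1+83\,n^{1/2-1/p})\,\inf_{g\in V_n}\|f-g\|_\infty.
\]
Since $n^{1/2-1/p}\ge 1$ in this range (as $1/2-1/p\ge 0$), we can bound $1+83\,n^{1/2-1/p}\le 84\,n^{1/2-1/p}$, which is the claimed estimate because $(1/2-1/p)_+=1/2-1/p$ for $p\ge 2$.

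Third, for $1\le p<2$, I would observe that $\|f\|_{L_p(\mu)}\le\|f\|_{L_2(\mu)}$ for every $f$, which again uses that $\mu$ is a probability measure (Jensen or H\"older). Combining this with the $p=2$ case already proved gives
\[
 \|f-A(X,V_n)f\|_{L_p(\mu)} \,\le\, \|f-A(X,V_n)f\|_{L_2(\mu)} \,\le\, 84\,\inf_{g\in V_n}\|f-g\|_\infty,
\]
and this matches the claim since $n^{(1/2-1/p)_+}=1$ for $p\le 2$. There is no real obstacle here; the only point that deserves a line of care is verifying that the constant $1+83\,n^{1/2-1/p}$ can indeed be absorbed into $84\,n^{1/2-1/p}$, and that the hypotheses of Lemma~\ref{lem:general-norm} (in particular $\|\cdot\|_X$ being a norm on $V_n$) are inherited from the construction of $X$ in Theorem~\ref{thm:disc_Lp}.
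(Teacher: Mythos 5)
Your proposal is correct and follows essentially the same route as the paper: for $p\ge 2$ it combines Theorem~\ref{thm:disc_Lp} with Lemma~\ref{lem:general-norm} (with $G=B_\mu(D)$ and $\|\cdot\|=\|\cdot\|_{L_p(\mu)}$), and it reduces $p<2$ to $p=2$ via monotonicity of $L_p$-norms on a probability space. The additional checks you spell out (that $\|\cdot\|_X$ is a norm on $V_n$ thanks to the sup-norm discretization points inside $X$, and that $1+83\,n^{1/2-1/p}\le 84\,n^{1/2-1/p}$) are exactly the details the paper leaves implicit.
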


\medskip

\begin{proof}
    The case $p\le 2$ follows from the case $p=2$, so let $p\ge 2$.
    In this case, the result is a direct consequence of Theorem~\ref{thm:disc_Lp} and Lemma~\ref{lem:general-norm}
    with $G=B_\mu(D)$
   and $\|\cdot \|=\| \cdot\|_{L_p(\mu)}$.\\
\end{proof}

\begin{remark}[Universality]
    It is worthwhile to note that the algorithm $A(X,V_n)$ from Theorem~\ref{thm:LS_Lp} does not depend on $p$;
    the same least-squares algorithm is good 
    in all $L_p(\mu)$-norms.
\end{remark}

\smallskip

\begin{remark}[Least-maximum estimator]\label{Rem:least_maximum}
Lemma~\ref{lem:general-norm} {with $G=B(D)$} shows that 
 a 
discretization of the sup-norm on $V_n$ 
by the discrete $\ell_2$-norm 
{with a factor smaller than $\sqrt{n}$ (as guaranteed by Theorem~\ref{thm:disc_sup}) also implies a smaller error bound for $A(X,V_n)$.}
A reverse statement is not true.
\footnote{{For $X=D=\{1,\dots,n\}$
and $B(D)=V_n=\R^n$, the method  $A(X,V_n)$ is exact on $B(D)$, but a discretization with factor smaller $\sqrt{n}$ is not possible.}}\\
In contrast, for
the (typically nonlinear) 
\emph{least-maximum estimator}
\[
M(X,V_n)f\,:=\,\underset{g \in V_n}{\rm argmin}\ \max_{x \in X} |g(x)-f(x)|
\]
on $B(D)$,
where we assume that the finite point set $X\subset D$ is such that $g\in V_n$ and $g\vert_X=0$ implies $g=0$,
one can prove that 
each of the statements
\begin{compactenum}
    \item $\Vert g \Vert_\infty \le c_1 \cdot  \max_{x\in X} |g(x)|$ \, for all $g\in V_n$,
    \item $\|M(X,V_n) f \|_\infty \le c_2 \cdot \| f \|_\infty$ \, for all $f\in B(D)$,
    \item $\Vert f - M(X,V_n)f \Vert_\infty \le c_3 \cdot \inf_{g\in V_n} \Vert f - g\Vert_\infty$ \, for all $f\in B(D)$,
\end{compactenum}
implies the other two with a slightly increased  $c_i$, $i\le 3$. 
In particular, a good discretization on $V_n$ by the discrete sup-norm, see Remark~\ref{rem:uniform-disc}, is equivalent to a small error of $M(X,V_n)$.
\end{remark}

For convenience, let us present the proof of the above equivalences.

\begin{proof}[Proof] 
We write $\Vert f \Vert_* = \max_{x\in X} |f(x)|$ and $M=M(X,V_n)$.\\[3pt]
(1) $\Rightarrow$ (3):
For any $f\in B(D)$ and $g\in V_n$,
\[
 \Vert g - Mf \Vert_\infty
 \le c_1 \Vert g - Mf \Vert_*
 \le c_1 (\Vert g - f \Vert_*
 +  \Vert f - Mf \Vert_*)
 \le 2 c_1 \Vert f - g \Vert_\infty
\]
and hence
\[
 \Vert f - Mf \Vert_\infty
 \le \Vert f - g \Vert_\infty
 + \Vert g- Mf \Vert_\infty
 \le (2c_1 + 1) \Vert f - g \Vert_\infty.
\]
(3) $\Rightarrow$ (2):
This follows from
\[
 \Vert M f \Vert_\infty
 \le \Vert f \Vert_\infty + \Vert f-Mf \Vert_\infty
 \le (1+ c_3) \Vert f \Vert_\infty.
\]
(2) $\Rightarrow$ (1):
Let $g\in V_n$. We choose $f \in B(D)$ with $f(x)=g(x)$ for $x\in X$ and $f(x)=0$ otherwise.
Then $\Vert f - Mf \Vert_* \le \Vert f - g \Vert_*=0$ and hence $Mf\vert_X=f\vert_X=g\vert_X$
so that $Mf=g$.
This gives
\[
 \Vert g \Vert_\infty 
 = \Vert Mf \Vert_\infty
 \le c_2 \Vert f \Vert_\infty
 = c_2 \Vert g \Vert_{*}. 
\]
\end{proof}



\section{Implications for optimal sampling recovery}
\label{sec:OR}

The above results have
consequences for the problem of optimal recovery 
based on function evaluations, 
which was actually the starting point for our studies, see~\cite{KPUU23}.

For this, let us define the \emph{$n$-th linear sampling number} of a class 
$F$ of complex-valued functions on a set $D$ 
in a (semi-)normed space $G$ by
\begin{align*}
g_n^{\lin}(F,G) \,:=\, 
\inf_{\substack{x_1,\dots,x_n\in {D}\\ \varphi_1,\dots,\varphi_n\in G}}\, 
\sup_{f\in F}\, 
\Big\|f - \sum_{i=1}^n f(x_i)\, \varphi_i\Big\|_G.
\end{align*}
This is the \emph{minimal worst case error} that 
can be achieved with 
a linear sampling recovery algorithm
based on at most $n$ function values, 
if the error is measured in $G$. 

We want to compare the \emph{power} of linear sampling algorithms and arbitrary algorithms based on arbitrary information. 
For this, 
%
one usually takes the Gelfand numbers of the class $F$ as the ultimate benchmark, 
see Section~\ref{sec:gn-cn}. 
However, we start with a rather direct reformulation of our results in terms of the Kolmogorov numbers. 
These comparisons will
also imply the sharpness of the factor $\sqrt{n}$ in our main results.

\subsection{Sampling vs.~Kolmogorov numbers} \label{sec:gn-dn}

The \emph{$n$-th Kolmogorov number} 
of a subset $F$ of a (semi-)normed space $G$
is defined by 
\[
d_n(F,G) \;:=\; \inf_{\substack{V_n\subset G\\ \dim(V_n)=n}}\sup_{f\in F}\,\inf_{g \in V_n}\|f-g\|_G.  
\]
This quantity often appears as a benchmark in the literature.  
However, although this is a natural geometric quantity, let us stress that $d_n$ has in general no relation to optimal algorithms, see e.g.~\cite{NW1}.

A general bound in this context is due to Novak, see~\cite[Prop.~1.2.5]{Novak} or \cite[Thm.~29.7]{NW3}, 
which states that 
\begin{equation}\label{eq:Novak}
g_n^\lin(F,B(D)) 
\,\le\, (1+n)\,d_n(F,B(D)).
\end{equation}
If we stick to $n$ samples,
the factor $(1+n)$
cannot be improved:
For each $n$, there is a class $F$ where \eqref{eq:Novak} is an equality, 
see~\cite[Ex.~29.8]{NW3}.
On the other hand, 
a result by Kashin, Konyagin, and Temlyakov, see \cite[Thm.~5.2]{KKT21}, shows that
for {compact}
sets $F\subset C(D)$ on 
compact
$D \subset \R^d$, 
the factor $n$ can be removed by exponential oversampling, namely,
\begin{equation*}\label{eq:KKT}
g_{9^n}^\lin(F,B(D)) 
\,\le\, 5\,d_n(F,B(D)).
\end{equation*}
Actually, the relation above was proved for 
the least-maximum estimator (see the definition in Remark \ref{Rem:least_maximum}), which is non-linear. But one knows (see, e.g., 
\cite[Thm.\,4.8]{NW1} or \cite[Thm.\,1 and Prop.\,13]{Creutzig_Wojtaszczyk2004}) that
if 
$F$ is convex and symmetric and
the target space is $B(D)$ (as in our case), 
the non-linear and the linear sampling numbers are equal.
Applying this equality for the convex hull $K$ of $F\cup (-F)$
and using that $d_n(K, B(D))=d_n(F,B(D))$ gives the above inequality for general compact $F$.

We present a result that counterbalances the oversampling and the extra factor in the error bound. Namely,
Theorem~\ref{thm:LS_sup} 
implies that
\eqref{eq:Novak} can be improved by a factor of $\sqrt{n}$ merely by allowing a constant oversampling factor, i.e., $b\cdot n$ sampling points instead of $n$. 
From Theorems~\ref{thm:LS_sup} and~\ref{thm:LS_Lp}, we obtain a corresponding result for $L_p$-approximation.

\begin{theorem}\label{thm:widths}
    There is a constant $C>0$ such that the following holds.
    \begin{enumerate}
        \item For any set $D$, any $F\subset B(D)$, and all $n\in\N$,
    \begin{equation}\label{eq:gndn}
  g_{2n}^\lin(F,B(D)) 
\;\le\; C\,\sqrt{n}\;d_n(F,B(D)).
    \end{equation}
    \item For any probability space $(D,\mu)$, any $F\subset B_\mu(D)$, all $n\in\N$, and all $1\le p \le \infty$,
       \begin{equation}\label{eq:gndn_Lp}
  g_{4n}^\lin(F,L_p(\mu)) 
\;\le\; C\,n^{(1/2-1/p)_+}\;d_n(F,B_\mu(D)).
    \end{equation}
    \end{enumerate}
\end{theorem}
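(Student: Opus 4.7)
The plan is to derive both bounds by instantiating the least‐squares algorithm from Section~\ref{sec:LS} on a nearly $d_n$-optimal subspace. In both parts the linear sampling number $g_{2n}^{\lin}$ (resp.\ $g_{4n}^{\lin}$) is an infimum, so it suffices to exhibit a concrete linear algorithm using at most $2n$ (resp.\ $4n$) samples with the claimed error.

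For part~(1), fix $\varepsilon>0$ and pick an $n$-dimensional subspace $V_n \subset B(D)$ with
\[
\sup_{f\in F}\inf_{g\in V_n}\|f-g\|_\infty \,\le\, (1+\varepsilon)\, d_n(F,B(D)).
\]
Apply Theorem~\ref{thm:LS_sup} to $V_n$ to obtain a multiset $X\subset D$ with $|X|\le 2n$ such that the least-squares projection $A(X,V_n)\colon B(D)\to V_n$ satisfies
\[
\|f-A(X,V_n)f\|_\infty \,\le\, C\sqrt{n}\,\inf_{g\in V_n}\|f-g\|_\infty
\]
for every $f\in B(D)$. As discussed at the beginning of Section~\ref{sec:LS}, $A(X,V_n)f$ has the form $\sum_{x\in X} f(x)\,\varphi_x$ with $\varphi_x\in V_n\subset B(D)$, so it is an admissible candidate in the definition of $g_{2n}^{\lin}(F,B(D))$. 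Taking the supremum over $f\in F$ and then $\varepsilon\to 0$ yields \eqref{eq:gndn}.

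For part~(2), the argument is identical but with $B_\mu(D)$ in place of $B(D)$ and Theorem~\ref{thm:LS_Lp} in place of Theorem~\ref{thm:LS_sup}: choose a near-optimal $V_n\subset B_\mu(D)$, apply Theorem~\ref{thm:LS_Lp} to get a multiset $X$ with $|X|\le 4n$ such that $A(X,V_n)$ obeys the error bound
\[
\|f-A(X,V_n)f\|_{L_p(\mu)} \,\le\, 84\, n^{(1/2-1/p)_+}\,\inf_{g\in V_n}\|f-g\|_\infty
\]
for all $f\in B_\mu(D)$ and all $p\in[1,\infty]$. Again $A(X,V_n)$ is a linear sampling algorithm using at most $4n$ function values; taking the supremum over $f\in F$, the infimum over $V_n$, and $\varepsilon\to 0$ gives \eqref{eq:gndn_Lp}. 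The case $p\le 2$ is already absorbed by the exponent $(1/2-1/p)_+$ (for $p\le 2$ the bound follows from the case $p=2$ by Jensen's inequality).

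There is no real obstacle: the substance of the theorem is already in Theorems~\ref{thm:LS_sup} and~\ref{thm:LS_Lp}. The only bookkeeping point is to verify that the least-squares projection $A(X,V_n)$ qualifies as a linear sampling algorithm in the sense of the definition of $g_n^{\lin}$, which is immediate from its explicit representation $f\mapsto\sum_{x\in X}f(x)\varphi_x$ with coefficient functions $\varphi_x\in V_n$; if $|X|$ is strictly less than $2n$ (resp.\ $4n$) we simply pad with zero coefficients.
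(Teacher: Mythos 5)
Your proposal is correct and follows essentially the same route as the paper: instantiate Theorem~\ref{thm:LS_sup} (resp.\ Theorem~\ref{thm:LS_Lp}) on a nearly $d_n$-optimal subspace $V_n$, note that $A(X,V_n)$ is an admissible linear sampling algorithm with at most $2n$ (resp.\ $4n$) nodes, and take the supremum over $f\in F$ followed by the infimum over $V_n$. The additional bookkeeping you supply (the explicit form $\sum_{x\in X}f(x)\varphi_x$, padding with zeros, and the reduction of $p\le 2$ to $p=2$) is all consistent with what the paper leaves implicit.
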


\medskip

Note that the case $p=2$ of the previous theorem, and 
of Theorem~\ref{thm:LS_Lp}, was obtained first in~\cite[Thm.~1.1]{T20} for classes of continuous functions on compact domains by using a weighted least-squares method, 
in contrast to the unweighted least-squares method employed here
and in~\cite{BSU}.

\smallskip

\begin{remark}
If $D$ is a compact topological space and $F\subset C(D)$,
then~\eqref{eq:gndn} holds with $B(D)$ replaced by $C(D)$.
Moreover, if $\mu$ is a Borel measure,
we can replace $B_\mu(D)$ with $C(D)$ in \eqref{eq:gndn_Lp} as well.
\end{remark}

\smallskip

\begin{proof}[Proof of Theorem~\ref{thm:widths}]
    {The first part}
    follows immediately from Theorem~\ref{thm:LS_sup} {(or Theorem~\ref{thm:KS-new})}
    and {the second part from} 
    Theorem~\ref{thm:LS_Lp} by taking the supremum over all $f\in F$
    and then the infimum over all $V_n \subset B(D)$, 
    respectively $B_\mu(D)$. \\
\end{proof}

\medskip

\subsection{Sampling vs.~Gelfand numbers} \label{sec:gn-cn}

In this section, we want to discuss the power of linear algorithms using $n$ function evaluations compared to arbitrary (possibly non-linear) algorithms using $n$ pieces of arbitrary continuous linear information. In other words, we discuss
the relation of 
linear sampling numbers and Gelfand numbers. 
We assume that
$F$ is the unit ball of a normed function space $\widetilde{F}$, where point evaluations are continuous, and which is continuously embedded into $G$, see for instance \cite[Def.\ 2.6 and Lem.\ B.1]{JUV}. 
Note that point evaluations are continuous if 
$\widetilde{F}$ is continuously embedded into $B(D)$.

The 
\emph{$n$-th Gelfand number} 
of $F$ in $G$ is defined by
\begin{align*}
c_n(F, G) \,:=\,
\inf_{\substack{\psi\colon \C^n\to G\\ 
N\in \mathcal{L}(\widetilde{F},\C^n)}
}\; 
\sup_{f\in F}\, 
\big\|f - \psi\circ N(f) \big\|_{G}.
\end{align*}
It measures the worst case error of the optimal (possibly non-linear) algorithm using $n$ pieces of arbitrary linear information.

It is well known that linear algorithms are optimal if $G=B(D)$ and hence
\begin{equation}\label{eq:dncn}
d_n(F,B(D))\,\le\, c_n(F,B(D))
\,=\, \inf_{\substack{T\in \mathcal{L}(\widetilde{F},B(D)) \\ 
{\rm rk}(T)\le n}}\; 
\sup_{f\in F}\, 
\big\|f - Tf \big\|_\infty,
\end{equation}
see e.g.~\cite[Thm.~4.8]{NW1} and the reference there. 
Hence, we obtain in this case from Theorem~\ref{thm:widths} that

\begin{equation}\label{eq:gncn}
  g_{2n}^\lin(F,B(D)) 
\;\le\; C\,\sqrt{n}\;c_n(F,B(D))\,,
\quad n\in\N.
\end{equation}
This and the example provided in Section~\ref{sec:sharpness} show that turning from arbitrary algorithms for uniform approximation 
to linear algorithms based on function values, 
the maximal loss in the algebraic rate of convergence is equal to 1/2.

We do not know of a similar result for $L_p$-approximation in the case $p<\infty$.
Theorem~\ref{thm:widths} does not admit a direct comparison of the numbers $g_{n}^\lin(F,L_p(\mu))$ and $c_n(F,L_p(\mu))$.
It is an interesting open problem what the maximal loss in the rate of convergence can be for $p<\infty$.

An exception is the case $p=2$. Here, recent results show
that the same algebraic rate is obtained 
by $g_n^\lin(F,L_2(\mu))$ and $c_n(F,L_2(\mu))$ 
if $F$ is the unit ball of a reproducing kernel Hilbert space  
and the numbers $c_n(F,L_2(\mu))$ are square-summable, 
see~\cite{DKU,KU1,NSU}. 
This summability condition is sharp, see \cite{HNV,HKNV,KV}. 
For general classes $F$,
the maximal loss in the rate of convergence is again equal to $1/2$:
If $c_n(F,L_2(\mu)) \lesssim n^{-\alpha}$ for some $\alpha>1$, then $g_n^\lin(F,L_2(\mu))\lesssim n^{-\alpha+1/2}$.
We refer to \cite[Thm.~1\,\&\,2]{KSUW} for quick access to the required formulas.
It is obtained from the Hilbert space results via embedding \cite{DKU,KU2}
and a classical result about $s$-numbers from \cite[Thm.~8.4]{Pietsch-s}, 
see also~\cite{Creutzig_Wojtaszczyk2004}.

The algorithm that is used in the aforementioned papers
is different from the algorithm that we use in the present paper: the sampling points are chosen differently and weights might be required.
Under additional assumptions, however,
this algorithm 
is also a very good algorithm for approximation in other norms, including the uniform and $L_p$ norm.
The corresponding analysis can be found in
\cite{GW23,KPUU23,PU} or~\cite{SU23}.
In particular, for many reproducing kernel Hilbert spaces,
even the factor $\sqrt{n}$ in~\eqref{eq:gncn} can be avoided.

\medskip

\subsection{Sharpness of our results}\label{sec:sharpness}

{We now argue that the upper bounds in Theorem~\ref{thm:KS-new}, Theorem~\ref{thm:disc}, Theorem~\ref{thm:LS_sup}, \eqref{eq:gndn} and  \eqref{eq:gncn} are sharp in the following sense:
The polynomial order of the $n$-dependent (or constant) factor on the right hand side cannot be improved, also if the number of points $2n$ is replaced with $N\le cn^\alpha$ points for any absolute constants $c,\alpha>0$.
If the amount of samples is linear (i.e., $\alpha=1$), the upper bounds cannot even be improved by any factor $o(1)$.

{For this, we first recall {how the different results interact.} 
An improved factor in the discretization bound (Theorem~\ref{thm:disc}) would lead to a correspondingly improved norm of the sampling projection (Theorem~\ref{thm:KS-new}) 
{via Lemma~\ref{lem:sampling-projections-general},}
and {a correspondingly improved} error of the least-squares algorithm (Theorem~\ref{thm:LS_sup})
{via Lemma~\ref{lem:general-norm}}.
On the other hand, an improved version of any of the {Theorems~\ref{thm:KS-new} or \ref{thm:LS_sup}} would lead to an analogously improved bound on the linear sampling numbers in \eqref{eq:gndn}.
Finally, via the relation~\eqref{eq:dncn}, an improved factor in \eqref{eq:gndn} results in an improved factor in \eqref{eq:gncn}.
In all these implications, the number of samples does not change.
It hence suffices to show that \eqref{eq:gncn} is sharp in the two ways described above.
}

%
%

In order to prove the sharpness of the bound \eqref{eq:gncn}, we
choose the class $F \subset C(D)$ 
as the unit ball of the Besov space $B^s_{1,q}(D)$ 
with smoothness $s>1$ and $1\le q\le \infty$
on the domain $D=[0,1]$.
For this example, it is known that
\begin{equation}\label{eq:example}
g_{n}^\lin(F,B(D)) 
\gtrsim n^{-s+1}
\quad\text{and}\quad
c_n(F,B(D)) \lesssim n^{-s+1/2},
\end{equation}
see \cite[eq.\,(1.19)]{NoTr06} and \cite[Thm.\,4.12]{V08}.\footnote{{To be precise,
both references consider $L_\infty(D)$ instead of $B(D)$.
However, one can show that replacing $L_\infty(D)$ with $B(D)$ does not change the numbers $g_n^\lin$ and $c_n$ if $F$ is a convex and symmetric subset of $C(D)$, see \cite[Sec.\,4.2]{NW1}.}}
%
Now, if we could replace the factor $\sqrt{n}$ in \eqref{eq:gncn} with a factor $K_n = o(\sqrt{n})$ in exchange for replacing $2n$ with $cn$ for some constant $c\in\N$,
this would clearly contradict \eqref{eq:example}.
But even if we replace the number of samples $2n$ by $c n^\alpha$ 
with constants $c,\alpha\in\N$,
we cannot replace the factor $\sqrt{n}$ by a factor $n^\beta$ with $\beta<1/2$
as this would also contradict \eqref{eq:example}
in the case that $s>1$ is close to one, namely, for $s< (\alpha-\beta-1/2)/(\alpha-1)$.

If we turn to the case of $L_p$-approximation with $2\le p < \infty$ and consider the same example,
it is known that
\[
g_n^\lin(F,L_p) \gtrsim n^{-s+1-1/p},
\]
see again \cite[eq.\,(1.19)]{NoTr06}. 
A similar reasoning as above shows that the factor $n^{(1/2-1/p)_+}$ cannot be replaced by a lower-order term using a linear amount of samples in any of the results 
\eqref{eq:gndn_Lp}, Theorem~\ref{thm:disc_Lp} and Theorem~\ref{thm:LS_Lp}. 
What the example does not tell us is whether
the factor can be improved by using a polynomial amount of samples.
We would have to choose $s$ close to $1-1/p$ for a contradiction of the formulas, but this is not possible due to the restriction $s>1$.
We leave this as an open problem.
}

\smallskip

\begin{remark}
The estimate \eqref{eq:gndn} is stronger than \eqref{eq:gncn}.
Indeed, {for the unit ball $B^s_2$ of the Sobolev space of functions on $[0,1]$ with smoothness $s>1$ it holds}
    \[
    g_{n}^\lin(B^s_2,L_\infty)
    \asymp c_{n}(B^s_2,L_\infty)
    \asymp n^{-s+1/2},
    \quad
    d_n(B^s_2,L_\infty)\asymp n^{-s},
    \]
    see again~\cite[Sect.~1.3.11]{Novak} 
    and~\cite[Thm.~VII.1.1]{Pinkus85} (or \cite{Kashin77}).
    Hence, \eqref{eq:gndn}
gives the optimal bound on the sampling numbers 
for the 
class $B_2^s$, while \eqref{eq:gncn} would lead to a suboptimal bound.
\end{remark}



\noindent\textbf{Acknowledgement.} The authors would like to thank Abdellah Chkifa and Matthieu Dolbeault for bringing the reference \cite{KieWo60} to their attention, see~\cite{CD23}.
In addition, the authors would like to thank Erich Novak for pointing out the connection of Theorem~\ref{thm:LS_sup} to the classical Kadets-Snobar theorem and Auerbach's lemma
{as well as an anonymous referee for several helpful comments that added a lot more clarity to the paper}. 
The authors are also grateful for the hospitality of the Leibniz Center Schloss Dagstuhl where this manuscript has been discussed during the Dagstuhl Seminar 23351 {\it Algorithms and Complexity for Continuous Problems} (August/September 2023). 
DK is supported by the Austrian Science Fund (FWF) grant M~3212-N. 
For open access purposes, the author has applied a CC BY public copyright license to any author accepted manuscript version arising from this submission. 
KP acknowledges support by the Philipp Schwartz Fellowship of the Alexander von
Humboldt Foundation and the budget program of the NAS of Ukraine ``Support for the development of priority areas of research'' (KPKVK 6541230). 
MU is supported by the Austrian Federal Ministry of Education, Science and Research via the Austrian Research Promotion Agency (FFG) through the project FO999921407 (HDcode) funded by the European Union via NextGenerationEU.
TU and KP are supported by the German Research Foundation (DFG) with grant Ul403/4-1\,.



\begin{thebibliography}{77}


\bibitem{BSS} J.D. Batson, D.A. Spielman, N. Srivastava, 
{\it Twice-Ramanujan sparsifiers}, in: STOC’09—Proceedings of the 2009 ACM International Symposium on Theory of Computing, pp. 255–-262, ACM, New York, 2009.

\bibitem{BKPSU2024}
F. Bartel, L. K\"ammerer, K. Pozharska, M. Sch\"afer, T. Ullrich,
{\it
Exact discretization, tight frames and recovery via D-optimal designs},
arXiv:2412.02489, 2024.



\bibitem{BSU} F. Bartel, M. Sch\"afer, and T. Ullrich, {\it Constructive subsampling of finite frames with applications in optimal function recovery}, Applied. Comput. Harmon. Anal. 65, 209--248, 2023.





\bibitem{Bos18}
L.~Bos, 
\textit{Fekete points as norming sets}, 
Dolomites Research Notes on Approximation 11(4), 26--34, 2018. 

\bibitem{Bos22}
L.~Bos, 
\textit{On optimal designs for a $d$-cube},
Dolomites Research Notes on Approximation 15(4), 20--34, 2022.





\bibitem{CL08}
J.-P. Calvi, N. Levenberg, 
Uniform approximation by discrete least squares polynomials, 
J. Approx. Theory 152(1), 82--100, 2008.

\bibitem{CD23}
A. Chkifa and M. Dolbeault, 
{\it Randomized least-squares with minimal oversampling and
interpolation in general spaces}, 
SIAM J. Numer. Anal.  62(4), 1515--1538, 2024.



\bibitem{CM} A. Cohen, G. Migliorati, 
\emph{Optimal weighted least squares methods,} SMAI J. Comput. Math. 3, 181--203, 2017.

\bibitem{Creutzig_Wojtaszczyk2004} 
J. Creutzig, P. Wojtaszczyk,
{\it Linear vs. nonlinear algorithms for linear problems},
J. Complexity 20, 807--820, 2004. 

\bibitem{DPTT19}
F. Dai, A. Prymak, V.N. Temlyakov, S.Yu. Tikhonov, 
{\it Integral norm discretization and related problems}, Russ. Math.
Surv. 74 (4) (2019) 579--630
. Translation from Usp. Mat. Nauk 74:4 (448), 3--58, 2019.


\bibitem{DKU} M.\,Dolbeault, D.\,Krieg, and M.\,Ullrich,  
{\it A sharp upper bound for sampling numbers in $L_2$}, 
{Appl. Comput. Harmon. Anal.} 63, 113--134, 2023. 











\bibitem{GW23}
J.~Geng and H.~Wang, 
{\it On the power of standard information for tractability for $L_\infty$ approximation of periodic functions in the worst case setting}, 
J. Complexity, 80, 101790, 2024.






\bibitem{Gr19} K. Gröchenig, {\it Sampling, Marcinkiewicz-Zygmund inequalities, approximation, and quadrature rules}, J. Approx. Theory, 257:105455, 2020.


\bibitem{HL25}
D. Henrion, J.B. Lasserre. 
Approximate D-optimal design and equilibrium measure.
2025. hal-04688534v2f

 \bibitem{HNV} A.\,Hinrichs, E.\,Novak, and J.\,Vyb\'iral, {\it  Linear information versus function evaluations for $L_2$-approximation}, J. Approx. Theory, 153(1),  97-107, 2008.





\bibitem{HKNV} A.\,Hinrichs, D.\,Krieg, E.\,Novak, and J.\,Vyb\'iral, 
{\it Lower bounds for integration and recovery in $ L_2$}, J. Complexity, 101662, 2022. 



\bibitem{JUV} T. Jahn, T. Ullrich, F. Voigtlaender,
{\it Sampling numbers of smoothness classes via $\ell^1$-minimization},
J. Complexity 79, 101786, 2023.


\bibitem{KS} M.\,I.~Kadets and M.\,G.~Snobar,
{\it Some functionals over a compact Minkovskii space}, Math. Notes, 10(4), 694--696, 1971.

\bibitem{Kashin77} 
B.\,S.\,Kashin, 
{\it Diameters of some finite-dimensional sets and
classes of smooth functions}, 
Izv. Akad. Nauk SSSR Ser. Mat., 41(2): 334--351, 1977; English transl. in Math. USSR-Izv.,
11(2), 317--333, 1977,
https://doi.org/10.1070/IM1977v011n02ABEH001719.

\bibitem{KKT21}
B.~Kashin, S.~Konyagin, and V.~Temlyakov, 
{\it Sampling discretization of the uniform norm},
Constr. Approx. 57(2), 663--694, 2023.

\bibitem{Karlin_Studden1966}
S. Karlin and W.J. Studden,
{\it Tchebycheff systems: with applications in
analysis and statistics},
Wiley Interscience, New York, 1966.

\bibitem{KKLT} B. Kashin, E. Kosov, I. Limonova, and V. Temlyakov,
{\it Sampling discretization and related problems},
J. Complexity, 101653, 2022.



 

\bibitem{KieWo60}
J.~Kiefer and J.~Wolfowitz.
\newblock {\it The equivalence of two extremum problems}, 
\newblock Canadian J. Math. 12, 363--366, 1960.


\bibitem{Kosov_Temlyakov23} 
E.\,D. Kosov and V.\,N. Temlyakov,
{\it Sampling discretization of the uniform norm and applications},
J. Math. Anal. Appl. 538 (2), 128431, 2024.


\bibitem{KPUU23}
D.\,Krieg, K.\,Pozharska, M.\,Ullrich, T.\,Ullrich,
{\it Sampling recovery in $L_2$ and other norms}, 
to appear in Math. Comp., 
arXiv:2305.07539, 2023.


\bibitem{KSUW} D. Krieg, P. Siedlecki, M. Ullrich, and H. Wo\'zniakowski,
{\it Exponential tractability of $L_2$-approximation with function values},
Adv. Comput. Math. 49, 18, 2023. 



\bibitem{KU1} D. Krieg and M. Ullrich,
{\it Function values are enough for $L_2$-approximation}, 
Found. Comp. Math., 21(4), 1141--1151, 2021.

\bibitem{KU2} D.\,Krieg and M.\,Ullrich,
{\it Function values are enough for $L_2$-approximation: Part II}, 
J. Complexity 66, 101569, 2021. 

\bibitem{KV} D.\,Krieg and J.\,Vyb\'iral, {\it New lower bounds for the integration of periodic functions}, 
 J. Fourier Anal. Appl. 29(41), 2023. 












\bibitem{Limonova_Malykhin_Temlyakov24}
I.V. Limonova, Y.V. Malykhin, and V.N. Temlyakov.
One-sided discretization inequalities and sampling recovery.
{\it Russian Math. Surveys}  79(3), 515-545, 2024.

\bibitem{LT} I. Limonova and V. Temlyakov, {\it On sampling discretization in $L_2$}, J. Math. Anal. Appl. 515(2), 126457, 2022. 











\bibitem{NSU} N. Nagel, M. Sch\"afer, and T. Ullrich,
{\it A new upper bound for sampling numbers}, Found. Comp. Math. 22(2), 445-468, 2022.






\bibitem{Novak} E. Novak, {\it Deterministic and stochastic error bounds in numerical analysis},
Lecture Notes in Mathematics 1349, Springer-Verlag, 1988.

\bibitem{NoTr06}
E.~Novak and H.~Triebel, 
\emph{Function Spaces in Lipschitz Domains and
Optimal Rates of Convergence for Sampling}, 
Constr. Approx. (2006) 23: 325–350.




\bibitem{NW1}
E.~Novak and H.~Wo\'zniakowski,
\newblock {\em Tractability of multivariate problems. {V}olume~{I}: {L}inear
  information}, volume~6 of {\em EMS Tracts in Mathematics}.
\newblock European Mathematical Society (EMS), Z\"urich, 2008.


\bibitem{NW3} E.\,Novak and H.\,Wo\'zniakowski, {\it Tractability of multivariate problems. Volume~III: Standard information for operators},
volume~18 of {\em EMS Tracts in Mathematics}.
\newblock European Mathematical Society (EMS), Z\"urich, 2012.






\bibitem{Pie07} A. Pietsch. {\it History of Banach spaces and linear operators}, Birkhäuser Boston, MA, 2007.


\bibitem{Pietsch-s}
A. Pietsch, {\it s-Numbers of operators in Banach spaces}, 
Studia Math. 51, 201--223, 1974.


\bibitem{Pinkus85}
A. Pinkus, {\it n-width in approximation theory}, 
Ergebnisse der Mathematik und ihrer Grenzgebiete. 3. Folge / A Series of Modern Surveys in Mathematics (MATHE3, volume 7), Springer Berlin, Heidelberg, 1985. 


\bibitem{PU}
K. Pozharska and T. Ullrich.
{\it A note on sampling recovery of multivariate functions in the uniform norm}, 
SIAM J. Numer. Anal., 60(3),  1363--1384, 2022. 

\bibitem{Rud99}
M. Rudelson, 
{\it Almost orthogonal submatrices of an orthogonal matrix}, Isr. J. Math. 111, 143--155, 1999.





\bibitem{Schn14} R. Schneider, {\it Convex bodies: the Brunn-Minkowski theory}, 2nd expanded ed. (English),
Encyclopedia of Mathematics and its Applications 151. Cambridge University Press, 2014. 



\bibitem{SU23}
M.~Sonnleitner, M.~Ullrich, 
{\it On the power of iid information for linear approximation}, 
J. Appl. Numer. Anal., 
 1,  88--126, 2023, 
https://doi.org/10.30970/ana.2023.1.88.












\bibitem{T20} V.\,N.\,Temlyakov, {\it On optimal recovery in $L_2$},  
J. Complexity, 65, 101545, 2021. 

\bibitem{T23} V.\,N.\,Temlyakov, {\it 
On universal sampling recovery in the uniform norm}, 
Proc. Steklov Inst. Math. 323, 206–216, 2023. 







\bibitem{V08} J. Vyb\'\i ral, 
{\it Widths of embeddings in function spaces},
J. Complexity, 24(4), 545--570, 2008.

\bibitem{XN23}
Y.~Xu, A.~Narayan, 
\emph{Randomized weakly admissible meshes}, 
J.~Approx. Theory 285, 105835, 2023.








\end{thebibliography}
\end{document}